\documentclass[10pt, leqno, twoside]{article}
\usepackage[english, francais]{babel}
\usepackage[T1]{fontenc}
 \usepackage[applemac]{inputenc} 
\usepackage{amsmath}
\usepackage{amsfonts}
\usepackage{amssymb}
\usepackage{amsthm}
\usepackage{mathrsfs}
\usepackage{euscript}
\usepackage[normalem]{ulem}
\usepackage{cancel}
\usepackage{array}
\usepackage[dvipsnames]{xcolor}

\usepackage[left=2cm,right=5.5cm,top=1cm,bottom=2cm]{geometry}
\usepackage{stmaryrd}
\usepackage{tikz}
\usepackage{fancyhdr}

\usepackage{setspace}
\pagestyle{fancy}
\newtheorem{thm}{Théorème}[section]

\newtheorem{rem}[thm]{Remarque}
\newtheorem{rems}[thm]{Remarques}

\newtheorem{lem}{Lemme}[section]

\title{Entiers ultrafriables en progressions arithmétiques}
\author{Cécile Dartyge, David Feutrie \& Gérald Tenenbaum
\\
\smallskip
\centerline{\small\dateheure}\\ 
}

\usepackage{titletoc}
\fancyhf{}
\fancyhead{}

\lhead[\thepage]{{\it Cécile Dartyge, David Feutrie \& Gérald Tenenbaum}}
\rhead[{\it Entiers ultrafriables en progressions arithmétiques}]{\thepage}

\fancyfoot{}

\topmargin     -.3cm
\textheight     22cm
\textwidth      14cm
\evensidemargin  1.1cm
\oddsidemargin   1.1cm
\parindent=4mm
\numberwithin{equation}{section}
\thispagestyle{empty} 
\newcommand{\md}[3]{#1 \equiv #2\hskip-2.5mm \pmod {#3} }
\newcommand{\mda}[3]{#1 \equiv #2 \hskip-2.5mm \pmod {#3} }

\newcommand{\mmd}[2]{\equiv #1\,(\mbox{mod}\,#2)}

\def\ee{{\rm e}}
\def\d{\,{\rm d}}
\def\dd{{\rm d}}
\def\0{{\boldsymbol 0}}

\def\CC{{\mathbb C}}

\def\R{{\mathbb R}}
 \def\cf{{cf.}}

\newcommand{{\gotT}}{{\mathfrak T}}
\newcommand{{\gotW}}{{\mathfrak W}}
\def\Z{{\mathbb Z}}

\newcommand{\rD}{\EuScript{D}}

\newcommand{\rV}{\EuScript{V}}
\newcommand{\rW}{\EuScript{W}}

               \countdef\temps=170
                \temps=\time
                \countdef\nminutes=171{\nminutes = \time}
                \countdef\nheure=172
                \def\heure{\begingroup
                   \temps = \time \divide\temps by 60
                   \nheure = \temps
                   \nminutes = \time
                   \multiply\temps by 60
                   \advance\nminutes by -\temps
                   \ifnum\nminutes<10 \toks1 = {0}%
                   \else\toks1 = {}%
                   \fi
                   \number\nheure h\the\toks1 \number\nminutes
                \endgroup}%

\def\dater{\vglue-10mm\rightline{(\the\day/\the\month/\the\year)}}
                \def\dateheure{(version \the\day/\the\month/\the\year,\ \heure)}
                
\providecommand{\keywords}[1]
{
  \small	
  \textbf{\bf{Keywords :}} #1
}

\begin{document}
\renewcommand{\refname}{Bibliographie}
\date{~}
\maketitle

{\leftskip10mm\rightskip10mm\noindent{\bf Abstract}\par 
A natural integer is called $y$-ultrafriable if none of the prime powers occurring in its canonical decomposition exceed $y$. We investigate the distribution of $y$-ultrafriable integers not exceeding $x$ among arithmetic progressions to the modulus~$q$. Given a  sufficiently small, positive constant $\varepsilon$, we obtain uniform estimates valid for $q\leqslant y^{c/\log_2y}$ whenever $\log y\leqslant (\log x)^\varepsilon$, and for $q\leqslant \sqrt{y}$ if $(\log x)^{2+\varepsilon}\leqslant y\leqslant x$.\par 
\medskip

\par \noindent\keywords{ultrafriable integers, friable integers, saddle-point estimates, sieve, Siegel zero, Gaussian distribution, local behaviour.}\par \medskip
\noindent{\bf 2010
Mathematics Subject Classification:} primary
11N25; secondary  11N35, 11N37, 11N60.\par }
\renewcommand{\contentsname}{Sommaire}

\section{Introduction}

Soit $y>0$. Un entier  positif $n$ est dit $y$-friable si son plus grand facteur premier, noté $P^+(n)$ avec la convention $P^+(1)=1$, n'excède pas $y$. Pour $x>0$, $y>0$, nous désignons par $S(x,y)$ l'ensemble des entiers $y$-friables inférieurs ou égaux à $x$ et par $\Psi(x,y)$ son cardinal. 
Les propriétés structurelles de l'ensemble $S(x,y)$ ont fait l'objet d'une abondante littérature depuis une trentaine d'années, notamment concernant sa répartition dans les progressions arithmétiques: voir par exemple Fouvry \& Tenenbaum \cite{FT}, Hildebrand \& Tenenbaum \cite{HT,HT93}, La Bretèche \& Tenenbaum \cite{RT}, Granville \cite{G1,G2}, Soundararajan \cite{Sound}, Harper \cite{Ha}. 
\par 

Quoique également susceptible d'intéressantes applications dans diverses branches des mathématiques --- \cf\ \cite{T15} et la bibliographie incluse --- une notion voisine, celle d'entier {\it ultrafriable}, a reçu beaucoup d'attention. Un entier naturel est dit $y$-ultrafriable s'il n'est divisible par aucune puissance de nombre premier excédant $y$. Pour $x>0$, $y>0$, nous désignons par $U(x,y)$ l'ensemble des entiers $y$-ultrafriables n'excédant pas $x$ et par $\Upsilon(x,y)$ son cardinal.
\par 
Des estimations  satisfaisantes de $\Upsilon(x,y)$ ont été obtenues dans \cite{T15}. Nous nous proposons ici d'aborder la question de la répartition des entiers ultrafriables dans les progressions arithmétiques en évaluant le comportement asymptotique des quantités
$$  \Upsilon_q(x,y):= \sum_{\substack{n \in U(x,y)\\ (n,q)=1 }}1  ,\qquad \Upsilon(x,y;a,q):= \sum_{\substack{n \in U(x,y)\\ \mda{n}{a}{q} }}1   $$
sous des conditions aussi peu restrictives que possibles concernant les variables $x,\,y$ et $q$. Notons toutefois qu'une prise en compte plus fine de la répartition des zéros des fonctions $L$ de Dirichlet permettrait d'étendre, dans les résultats présentés ci-dessous, le domaine de variation de la variable $q$. Nous avons préféré reporter ces complications à un prochain travail.  \par

 Désignons classiquement par $\tau(n)$ le nombre des diviseurs d'un entier naturel $n$, et par $\omega(n)$ le nombre de ses facteurs premiers, comptés sans multiplicité.  Notons par ailleurs $\pi(y)$ le nombre des nombres premiers n'excédant pas $y$.
\par 
Soit $y\geqslant 1$. Pour tout nombre premier $p$, nous avons $p^\nu\leqslant y$ si, et seulement si, $\nu\leqslant \nu_p = \nu_p(y) :=  \lfloor {\log y}/{\log p} \rfloor$. Posant
$$   \psi_q(y):=\sum_{\substack{p \leqslant y \\  p\,  \nmid \, q }}\nu_p \log p ,\qquad N_{q,y}:= \ee^{\psi_q(y)}, $$ 
il s'ensuit que $x\mapsto\Upsilon_q(x,y)$ est la fonction de comptage des diviseurs de $N_{q,y}$. Ainsi
\begin{equation*}
\Upsilon_q(x,y) = \tau(N_{q,y})=\prod_{\substack{p \leqslant y \\ p \, \nmid \, q}}(1+\nu_p)  \qquad  (x \geqslant N_{q,y})
\end{equation*}  
et donc
\begin{equation*}
\Upsilon_q(x,y)= 2^{\pi(y) +O(\sqrt{y}/\log y)} \qquad \big(x \geqslant N_{q,y}, \ \omega(q) \ll \sqrt{y}/\log y \big),
\end{equation*}
\par Comme la symétrie des diviseurs de $N_{q,y}$ autour de $\sqrt{N_{q,y}}$ implique
\begin{equation*}
\Upsilon_q(x,y)= \tau(N_{q,y})-\Upsilon_q \Big( \displaystyle \big(N_{q,y}/x\big)- ,y  \Big) \qquad \Big(\sqrt{N_{q,y}}\leqslant  x \leqslant N_{q,y}  \Big),
\end{equation*}
nous pouvons  restreindre l'étude de $\Upsilon_q(x,y)$ au cas
\begin{equation*}
x<\sqrt{N_{q,y}}, \quad \textrm{i.e.} \quad \psi_q(y)> 2 \log x.
\end{equation*}
Pour simplifier l'exposition, nous considérerons en fait le domaine légèrement étendu
\begin{equation}
\label{e11}
x\geqslant y\geqslant 2,\quad\psi(y)> 2 \log x,
\end{equation}
où $\psi(y):=\psi_1(y)$ désigne la fonction de Tchébychev. \par 
\smallskip
Nous restreignons également l'étude de $\Upsilon(x,y;a,q)$ au cas  $(a,q)=1$. Cette contrainte pourrait être levée au prix de quelques complications techniques. En effet, posant $d:=(a,q)$, de sorte que $(a/d,q/d)=1$,  nous avons $\Upsilon(x,y;a,q)=0$ si $d \not \in U(x,y)$, et, dans le cas contraire,  
\begin{equation*}
 \Upsilon(x,y;a,q)=\sum_{\substack{m\leqslant x/d\\ \mda m{a/d}{q/d}\\ p^\nu\|m\Rightarrow \nu\leqslant \nu_p-v_p(d)}}1,
\end{equation*}
où $v_p(d)$ désigne la valuation $p$-adique de $d$. Les techniques développées dans le présent travail peuvent être  adaptées pour évaluer cette quantité : il suffit essentiellement de remplacer la série de Dirichlet $Z_{q}(s,y)$ introduite en \eqref{e12} {\it infra} par $$Z_{q}(s,y)\prod_{\substack {p^\nu\|d\\ p\,\nmid\,q/d}}\frac{1-p^{-(\nu_p+1- \nu)s}}{1-p^{-s}}\cdot$$
Un exemple de l'estimation subséquente est donné à la Remarque \ref{r6} {\it infra}.\par

\section{Résultats}

\subsection{Évaluation de $\Upsilon_q(x,y)$}
Tenenbaum \cite{T15} a obtenu, pour tout $\varepsilon>0$, l'estimation
\begin{equation}\label{thm2}
\Upsilon(x,y) = \Psi(x,y) \Big \{  1 + O \Big(  \frac{u \log 2u}{\sqrt{y}\log y}  \Big)  \Big \}\qquad  \big(x \geqslant y \geqslant  (\log x)^{2+\varepsilon} \big),
\end{equation}
où, ici et dans la suite, nous notons $u:=\log x/\log y$ $(x\geqslant 2,y\geqslant 2)$.
La même méthode permet d'établir la pertinence de l'approximation de $\Upsilon_q(x,y)$ par $$\Psi_q(x,y):=\sum_{\substack{n\in S(x,y)\\(n,q)=1}}1$$ pour ces mêmes « grandes » valeurs de $y$, et un large domaine en $q$. Nous renvoyons à \cite{HT, RT,BT17} et à la bibliographie de ces travaux pour des évaluations explicites et implicites de $\Psi_q(x,y)$.
\par

\begin{thm}\label{T2}
Soit $\varepsilon  >0 $. Sous les conditions
\begin{equation}
\label{e110}
x \geqslant y \geqslant (\log x)^{2+\varepsilon},\quad P^+(q) \leqslant y, \quad     \omega(q) \ll \sqrt{y}  ,  
\end{equation}
nous avons
\begin{eqnarray}
\label{e111}
\Upsilon_q(x,y) = \Psi_q(x,y)  \bigg \{   1+O \bigg(  \frac{q u \log 2u}{\varphi(q) \sqrt{y} \log y }  \bigg)  \bigg \}  . 
\end{eqnarray}
\end{thm}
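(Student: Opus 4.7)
\noindent\emph{Plan of proof.} We exploit the fact that every $y$-ultrafriable integer is a fortiori $y$-friable, so $\Upsilon_q(x,y) \leq \Psi_q(x,y)$, and the difference counts those $y$-friable $n \leq x$ coprime to $q$ which fail to be $y$-ultrafriable. Any such $n$ must be divisible by $p^{\nu_p+1}$ for some prime $p \leq y$ with $p \nmid q$, whence a union bound yields
\begin{equation*}
0 \leq \Psi_q(x,y) - \Upsilon_q(x,y) \leq \sum_{\substack{p \leq y \\ p \,\nmid\, q}} \Psi_q\bigl(x/p^{\nu_p+1},\, y\bigr),
\end{equation*}
mimicking the opening step of the argument in \cite{T15} for the case $q = 1$.

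The plan is then to invoke a quantitative shift estimate of the form
\begin{equation*}
\Psi_q(x/d, y) \ll \frac{q}{\varphi(q)}\, \Psi_q(x,y)\, d^{-\alpha} \qquad \bigl(1 \leq d \leq x/y,\ (d,q) = 1\bigr),
\end{equation*}
where $\alpha = \alpha(x,y)$ denotes the usual saddle-point exponent for $\Psi(x,y)$. Such a bound is furnished by the Hildebrand-Tenenbaum saddle-point method as developed in \cite{HT, RT, BT17}, adapted to the coprimality constraint through a careful analysis of the Euler factors at primes dividing $q$. Its quantitative heart, under hypothesis~\eqref{e110}, is the relation $y^{1-\alpha} \asymp u \log 2u$, stemming from $\log y - \alpha \log y \asymp \log\bigl(1 + u\log(u+1)\bigr)$.

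We are thereby reduced to estimating
$$\gotS := \sum_{\substack{p \leq y \\ p \,\nmid\, q}} p^{-\alpha(\nu_p+1)}.$$
Splitting at $p = \sqrt{y}$: for $p \leq \sqrt{y}$ the definition of $\nu_p$ gives $p^{\nu_p+1} > y$, so each term is $\leq y^{-\alpha}$ and the contribution is $\ll \pi(\sqrt{y})\, y^{-\alpha} \ll y^{1/2-\alpha}/\log y$; for $\sqrt{y} < p \leq y$ we have $\nu_p = 1$, and partial summation against the prime counting function yields $\sum_{\sqrt{y} < p \leq y} p^{-2\alpha} \ll y^{1/2-\alpha}/\log y$ as well. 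Combining these contributions with $y^{1-\alpha} \asymp u \log 2u$ produces $\gotS \ll u \log 2u/(\sqrt{y}\log y)$, which after multiplication by $q/\varphi(q)$ delivers the bound~\eqref{e111}. The principal technical obstacle is the uniform shift estimate itself: while the case $q=1$ is essentially contained in \cite{T15}, the $q$-twisted version requires uniform control of the twisted Dirichlet series $Z_q(s,y)$ in a neighbourhood of the saddle point, a control that becomes particularly delicate when $q$ possesses small prime divisors; granted this input, the remainder of the argument reduces to the elementary prime-counting estimates sketched above.
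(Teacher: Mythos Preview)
Your approach is essentially the paper's: the same starting inequality, the same reduction to a shift estimate for $\Psi_q$, and the same sum $\gotS=\sum_{p\leqslant y,\,p\nmid q}p^{-(\nu_p+1)\alpha}$ bounded by $y^{1/2-\alpha}/\log y\ll u\log 2u/(\sqrt{y}\log y)$.

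Two points where the paper differs from your sketch are worth flagging. First, your shift estimate is stated for $1\leqslant d\leqslant x/y$, but the values $d=p^{\nu_p+1}$ satisfy $y<d\leqslant y^2$, so the condition $d\leqslant x/y$ fails once $y>x^{1/3}$. The paper handles this by an explicit case split: for $y\leqslant x^{1/3}$ it invokes \cite[th.~2.1 \& 2.4(i)]{RT} to get $\Psi_q(x/p^{\nu_p+1},y)\ll \Psi_q(x,y)\,p^{-(\nu_p+1)\alpha}$ directly (no loss of $q/\varphi(q)$ here), while for $x^{1/3}<y\leqslant x$ it uses the trivial bound $\Psi_q(x/p^{\nu_p+1},y)\leqslant x/p^{\nu_p+1}$ together with $\Psi_q(x,y)\asymp f_q(\alpha)x\asymp (\varphi(q)/q)\,x$; the factor $q/\varphi(q)$ in \eqref{e111} arises only from this second range. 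Second, the shift estimate is not a genuine obstacle: it is a direct citation from \cite{RT}, not something requiring a new analysis of $Z_q(s,y)$ near the saddle point. Once you insert the case split, your plan is complete.
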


Notons que le terme d'erreur de \eqref{e111} tend bien vers 0 dans le domaine \eqref{e110}: il est en fait $\ll1/(\log x)^{\varepsilon/2}$.
\par 
L'énoncé relatif aux petites valeurs de $y$ nécessite quelques notations supplémentaires. 
Nous désignons par
\begin{equation}\label{e12}
Z_q(s,y):= \prod_{\substack{p \leqslant  y\\ p  \, \nmid \, q}}\frac{1-p^{-(\nu_p+1)s}}{1-p^{-s}} \quad ({\Re}e\ s >0).
\end{equation}
la série de Dirichlet associée à la fonction de comptage $\Upsilon_q(x,y)$, convenons de poser $Z(s,y):=Z_1(s,y)$, et notons
\begin{equation*}
\varphi_1(s,y):=- \frac{Z'(s,y)}{Z(s,y)} =\sum_{p \leqslant y}\Big \{ \frac{\log p}{p^{s}-1} - \frac{(\nu_p+1)\log p}{p^{(\nu_p+1)s}-1} \Big \} \qquad  ({\Re}e\ s >0,\ y\geqslant 2) .
\end{equation*}
Ainsi qu'observé dans  \cite{T15},  l'équation
\begin{equation}\label{e13}
\varphi_1(\sigma,y)= \log x \qquad (\sigma >0)
\end{equation}
possède, sous la condition \eqref{e11}, une unique solution $\beta=\beta(x,y)$, qui est donc le point-selle relatif à l'intégrale de Perron pour $\Upsilon(x,y)$.
\par \goodbreak
Désignons par
\begin{equation}\label{Phi}
\Phi(z):=\frac{1}{\sqrt{2\pi}} \int_{z}^{\infty} \ee^{-t^2/2}\mathrm{d}t  \qquad (z \in  \mathbb{R} )
\end{equation}
la fonction de répartition décroissante de la loi gaussienne, et posons
$$  G(z):= \ee^{z^2/2}\Phi(z)   \qquad  (z \in \mathbb{R}), $$
de sorte que
\begin{equation}\label{e14}
G(z)=\tfrac{1}{2} + O(z) \ \ \ (z \rightarrow  0 ),\quad  G(z) = \frac{1}{\sqrt{2\pi}z}  \Big \{ 1-\frac{1}{z^2}+ O \Big( \frac{1}{z^4} \Big)   \Big \} \ \ \  (z \rightarrow + \infty ).
\end{equation} \par 

L'estimation suivante a été établie dans  \cite{T15} :
\begin{equation}
\label{thm1}
\Upsilon(x,y)= x^{\beta}Z(\beta,y)G(\beta \sqrt{\sigma_2}) \Big \{   1+O \Big( \frac{1}{u} \Big)  \Big \} \qquad  \big( 2 \log x < \psi(y) \ll (\log x)^3 \big).
\end{equation}  

Nous évaluons le rapport $\Upsilon_q(x,y)/\Upsilon(x,y)$ en exploitant l'approximation \eqref{thm1}, relevant du paramètre implicite $\beta$. La démarche est analogue à celle de  \cite{RT}, où La Bretèche et Tenenbaum recourent à l'approximation de $\Psi(x,y)$ par la méthode du col --- cf. \cite{HT} --- pour évaluer $\Psi_q(x,y)/\Psi(x,y)$. Dans ce dernier cas, le col de l'intégrale de Perron est défini comme l'unique solution positive $\alpha:=\alpha(x,y)$ de l'équation
\begin{equation}
\label{defalpha}
\sum_{p \leqslant y} \frac{\log p}{p^{\alpha}-1}= \log x.
\end{equation} 
\par 
Notant $p_k$ le $k$-ième nombre premier, de sorte que $p_k\asymp k\log 2k$ $(k\geqslant 1)$, et 
\begin{equation}\label{zq}
z_q:= p_{\omega(q)} \qquad (q \geqslant 1)
\end{equation}
avec la convention  $p_0:=2$, nous posons ensuite
\begin{align} 
\label{eta}
\eta&=\eta(x,y):=\frac{ \psi(y)}{\log x} -2,
\quad\vartheta_q=\vartheta_q(y):= \frac{\log z_q}{\log y} \asymp \frac{\log \{  2+ \omega(q) \}}{\log y} \quad (q \geqslant 1),
\end{align}
et introduisons les termes d'erreur $\Delta_q=\Delta_q(x,y)$ et $D_q=D_q(x,y)$ définis par
\begin{equation} 
\label{e15}
\Delta_q:=  \begin{cases}
\displaystyle  \frac{ (\log x)^{\vartheta_q}}{\log y} \Big(  1 + \frac{1}{\vartheta_q \log (1+\eta)}    \Big)  & \text{si }  2 \log x < \psi(y) \ll (\log x)^2 , \\
\noalign{\vskip-5mm}\\
\displaystyle  \frac{\vartheta_q \{ u \log 2u  \}^{\vartheta_q}}{1+\vartheta_q \log 2u} & \text{si } y>(\log x)^2,
\end{cases}   
\end{equation}
et $D_q:=\min \{\omega(q) , \Delta_q \}$. Il est à noter que, lorsque $y= (\log x)^{1+\lambda}$ avec $\lambda \asymp 1$, les deux expressions de $\Delta_q$ apparaissant dans \eqref{e15} sont du même ordre de grandeur.\par  

Posant
\begin{equation*}
g_q(s)  :=  \prod_{p \, \mid \, q} \frac{1-p^{-s}}{1-p^{-(\nu_p+1)s}}     \qquad   (q \geqslant 1,\   s \in \mathbb{C}     ),
\end{equation*} et  notant, ici et dans la suite, $\log_k$ la $k$-ième itérée de la fonction logarithme $(k \geqslant 1)$, nous obtenons le résultat suivant, où $x_0$ désigne une constante absolue assez grande.

\begin{thm}\label{T1}
Soit $ \varepsilon>0$. Les assertions suivantes sont valides sous les conditions
\begin{align}
  \label{e16}
 &x \geqslant y \geqslant 2,\quad  x>x_0,\quad   2 \log x <\psi(y) \leqslant \exp\big( (\log x)^{1/5}/(\log_2 x)^{(1+\varepsilon)/5}  \big)   ,\\
\label{e161} 
&  q \geqslant 1, \quad  P^+(q) \leqslant y  , \quad  \omega(q) \ll  y^{1/2-(1+\varepsilon)(\log_2 u)/ \log u}.
\end{align} 
\begin{itemize}
\item[\rm(i)] Nous avons 
\begin{equation}
\label{e17}
\begin{aligned}
\Upsilon_q(x,y) &= g_q(\beta) \Upsilon(x,y)  \bigg \{  1 + O \bigg(   \frac{1+D_q^2}{u}+  \frac{D_q(1+ \eta)}{\sqrt{u}+ \eta u}   \bigg)    \bigg \} \\
&=  x^{\beta} Z_q(\beta,y)  G(\beta \sqrt{\sigma_2})  \bigg \{    1 + O \bigg ( \frac{1+D_q^2}{u}+ \frac{D_q(1+\eta)}{\sqrt{u}+\eta u}   \bigg)   \bigg \}, 
\end{aligned}
\end{equation}
et donc, lorsque $\eta \gg 1 $,
\begin{equation*}
\Upsilon_q(x,y) = g_q(\beta) \Upsilon(x,y) \Big \{   1 +O \Big ( \frac{1+\Delta_q^2}{u}   \Big)  \Big \}.
\end{equation*}
\item[\rm(ii)]
Si de plus $\eta \leqslant \tfrac12$, alors 
\begin{equation}\label{e18}
\Upsilon_q(x,y) = g_q(\beta) \Upsilon(x,y) \Big \{   1 +R +O \Big ( \frac{1+\omega(q)^2}{u}   \Big)  \Big \}
\end{equation}
avec $R \asymp  \omega(q)(1+\eta)/ \big(\sqrt{u}+\eta u \big) $.
\medskip
\item[\rm(iii)] Sous la condition supplémentaire $\eta=o\big(1/\sqrt{u}\big)$, nous avons
\begin{equation}\label{e19}
\Upsilon_q(x,y) = g_q(\beta) \Upsilon(x,y) \Big \{   1 + \frac{\omega(q)}{\sqrt{\pi u}}+ O \Big (  \eta \omega(q) + \frac{\log q}{\sqrt{u}\log y}+  \frac{\omega(q)^2}{u}   \Big)  \Big \}.
\end{equation}
\end{itemize}
\end{thm}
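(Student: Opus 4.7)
L'approche repose sur l'application de la m\'ethode du col \`a l'int\'egrale de Perron associ\'ee \`a $Z_q(s,y)$, en s'appuyant sur la factorisation cl\'e $Z_q(s,y)=g_q(s)Z(s,y)$ valable sous l'hypoth\`ese $P^+(q)\leqslant y$. L'astuce fondamentale consiste \`a choisir pour axe d'int\'egration la droite verticale passant par le col $\beta=\beta(x,y)$ relatif \`a $Z(s,y)$ et d\'efini par \eqref{e13}, et non celui intrins\`eque \`a $Z_q(s,y)$. Ce choix autorise une comparaison directe avec l'estimation~\eqref{thm1}, au prix du contr\^ole de la variation du facteur perturbateur $g_q$ au voisinage de $\beta$.

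Le point de d\'epart est le d\'eveloppement de Taylor
$g_q(s)=g_q(\beta)+(s-\beta)g_q'(\beta)+\tfrac12(s-\beta)^2g_q''(\beta)+\cdots$
Ins\'er\'e dans l'int\'egrale, il produit d'abord $g_q(\beta)\Upsilon(x,y)$, que \eqref{thm1} \'evalue directement, puis des termes correctifs de la forme $g_q^{(k)}(\beta)M_k$, o\`u les moments $M_k$, tr\`es concentr\'es pr\`es du col, se contr\^olent par des puissances de $\sigma_2^{-1/2}$ multipli\'ees par des d\'eriv\'ees de la fonction $G$ en $\beta\sqrt{\sigma_2}$. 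Ces quantit\'es fournissent pr\'ecis\'ement les d\'ependances en $u$ et $\eta$ apparaissant dans \eqref{e17}. Cette partie est technique mais classique, inspir\'ee de la d\'emarche adopt\'ee pour $\Psi_q/\Psi$ dans \cite{RT}.

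L'obstacle principal r\'eside dans l'estimation pr\'ecise des d\'eriv\'ees $g_q^{(k)}(\beta)$. La d\'eriv\'ee logarithmique s'\'ecrit
$$\frac{g_q'(\beta)}{g_q(\beta)}=\sum_{p\,\mid\,q}\bigg\{\frac{(\nu_p+1)\log p}{p^{(\nu_p+1)\beta}-1}-\frac{\log p}{p^\beta-1}\bigg\};$$
la majoration triviale produit un facteur en $\omega(q)\log y$, mais pour atteindre les bornes $\Delta_q$ et $D_q$ de \eqref{e15}, il est n\'ecessaire de ranger les facteurs premiers de $q$ par ordre croissant, d'exploiter la d\'efinition de $z_q=p_{\omega(q)}$, et d'analyser finement le comportement de $\beta\log p$ selon que $p$ est au-dessous ou au-dessus de $z_q$. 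La bifurcation entre les r\'egimes $\eta\ll 1$ et $\eta\gg 1$ dans la d\'efinition \eqref{e15} refl\`ete pr\'ecis\'ement ce changement d'ordre de grandeur, et c'est la confection d'une majoration uniforme couvrant les deux situations qui constitue le point d\'elicat.

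Pour (ii), l'hypoth\`ese $\eta\leqslant 1/2$ assure que $\beta\sqrt{\sigma_2}$ demeure born\'e, et le terme $R$ de \eqref{e18} s'explicite en calculant $M_1$ via le rapport $G'(\beta\sqrt{\sigma_2})/G(\beta\sqrt{\sigma_2})$. Pour (iii), la contrainte suppl\'ementaire $\eta=o(1/\sqrt u)$ entra\^ine $\beta\sqrt{\sigma_2}\to 0$ gr\^ace \`a \eqref{e14}, ce qui fait appara\^itre le coefficient explicite $1/\sqrt{\pi u}$ apr\`es un d\'eveloppement suppl\'ementaire de $G$. Les termes r\'esiduels en $\eta\omega(q)$ et $\log q/(\sqrt u\log y)$ mesurent respectivement l'\'ecart dans l'approximation de $\beta\sqrt{\sigma_2}$ par $0$ et la contribution du second ordre dans le d\'eveloppement de $g_q$.
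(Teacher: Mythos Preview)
Your sketch is correct and follows essentially the same saddle-point strategy as the paper: a truncated Perron formula on the line $\Re e\,s=\beta$, Taylor expansion near the col, tail control via the decay of $|Z_q(\beta+i\tau,y)/Z_q(\beta,y)|$, and identification of the correction term through the behaviour of $G$ and its derivative at $\beta\sqrt{\sigma_2}$. The only variation is organisational: instead of factoring $Z_q=g_qZ$ and Taylor-expanding $g_q$ as a polynomial against the moments $M_k$, the paper expands $\log\{Z_q(s,y)x^s\}$ directly, so that the Gaussian carries $\sigma_{2,q}$ rather than $\sigma_2$ and the linear perturbation enters as the phase $\ee^{i\tau\gamma_q'(\beta)}$; the replacement of $\sigma_{2,q}$ by $\sigma_2$ is then carried out {\it a posteriori} via \eqref{e32} and \eqref{e414}. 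Both arrangements rest on the same key estimates (Lemmes~\ref{l32}--\ref{l35}) and produce identical error terms; your version has the slight advantage of making the leading term $g_q(\beta)\Upsilon(x,y)$ appear immediately, while the paper's keeps the Gaussian analysis self-contained before comparing to the case $q=1$. One small slip: in part~(iii) the limit $\beta\sqrt{\sigma_2}\to 0$ follows from \eqref{beta} and \eqref{e38}, not from~\eqref{e14}.
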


\begin{rems}\label{remsDqEq}
{\rm(a)} Sous la condition \eqref{e161}, et si en outre $ 0<\eta\leqslant  1$, nous avons  $\Delta_q\asymp \{1+ \omega(q)\}/\eta $ et donc $D_q\asymp\omega(q)$. 
\par 
{\rm(b)} Ainsi que l'attestent les formules \eqref{alpha} et \eqref{beta} {\rm infra}, les points-selles $\alpha$ et $\beta$ sont proches lorsque $y>(\log x)^{1+\varepsilon}$, ce qui implique alors $\Delta_q\asymp E_q$, où $E_q$ est le terme d'erreur introduit dans \cite{RT}. Cela permet de faire appel à certaines estimations établies dans \cite{RT}, notamment 
\begin{equation}
\label{majDeltaq}
\Delta_q(1+\Delta_q)\ll{\vartheta_q\log (u+1)}\ll1\quad\Big(y>(\log x)^{1+\varepsilon},\,\omega(q)\ll y^{1/\log (u+2)}\Big).
\end{equation}
\par 
Il s'ensuit en particulier que, dans les hypothèses du Théorème \ref{T1}, tous les termes d'erreur y apparaissant tendent vers 0.
 \par 
{\rm (c)} Le terme principal de la seconde formule \eqref{e17} constitue également une bonne approximation dans le domaine complémentaire 
\begin{equation}
\label{dcomp}
y>(\log x)^{2+\varepsilon},\quad\omega(q)\ll\sqrt{y}/\log y.
\end{equation}
 En effet, il résulte de \cite[Cor. 2.2]{RT}, \eqref{e111} avec $q=1$, \eqref{thm1}, et \eqref{alpha}, \eqref{beta} {\rm infra}, que, sous l'hypothèse \eqref{dcomp}, nous avons
\begin{equation*}
\label{gamq-ygrand}
\begin{aligned}
\Psi_q(x,y)&=g_q(\alpha)\Psi(x,y)\Big\{1+O\Big(\frac1u\Big)\Big\}=g_q(\alpha)\Upsilon(x,y)\Big\{1+O\Big(\frac1u\Big)\Big\}\\
&=g_q(\alpha)x^{\beta}Z(\beta,y)G(\beta \sqrt{\sigma_2}) \Big \{   1+O \Big( \frac{1}{u} \Big)  \Big \}\\
&=x^{\beta}Z_q(\beta,y)G(\beta \sqrt{\sigma_2}) \Big \{   1+O \Big( \frac{1}{u} \Big)  \Big \}.
\end{aligned}
\end{equation*}
Par \eqref{e111}, nous obtenons donc, sous les mêmes hypothèses,
\begin{equation}
\label{UP-ygrand}
\Upsilon_q(x,y) = x^{\beta}Z_q(\beta,y)G(\beta \sqrt{\sigma_2})  \bigg \{   1+O \bigg(  \frac{q u \log 2u}{\varphi(q) \sqrt{y} \log y } +\frac1u \bigg)  \bigg \}.
\end{equation} 
\par 
{\rm (d)} Comme $\Upsilon_q(x,y)$ ne dépend que du noyau sans facteur carré de $q$ et comme $\log q \leqslant \omega(q) \log y$ si $\mu(q)^2=1$ (où $\mu$ désigne la fonction de Möbius), $P^+(q) \leqslant y$, l'ordre de grandeur du second terme d'erreur de \eqref{e19} ne dépasse pas celui du terme principal complémentaire $\omega(q)/\sqrt{\pi u}$. 
\end{rems}

\subsection{Évaluation de $\Upsilon(x,y;a,q)$}
Notre approche repose, d'une part, sur une majoration des quantités
$$      \Upsilon(x,y;\chi) := \sum_{n \in U(x,y)}\chi(n),         $$
lorsque $\chi$ est un caractère de Dirichlet de module $q$ distinct du caractère principal~$\chi_0$, et, d'autre part, sur l'évaluation de Granville \cite{G2} 
\begin{equation}
\label{Gra}
\Psi(x,y;a,q):=\sum_{\substack{n\in S(x,y)\\ \md naq}}1 = \frac{\Psi_q(x,y)}{\varphi(q)}\Big \{  1 + O \Big(  \frac{\log q}{u^c \log y}+ \frac{1}{\log y}  \Big)  \Big \},
\end{equation} 
 valable, pour tout $\varepsilon>0$,  sous les conditions $x \geqslant y \geqslant q^{1+\varepsilon} $, et avec $c=c(\varepsilon)>0$.
 \par 
 Le résultat suivant rassemble nos résultats relatifs à la première voie.
Nous posons 
\begin{equation}\label{e112}
   Y_{\varepsilon}:=Y_{\varepsilon}(y)= \ee^{(\log y)^{3/2-\varepsilon}}   \qquad( \varepsilon >0,\  y \geqslant 2)
\end{equation}
 et convenons de désigner, dans toute la suite, par $c_j$ ($j=1,2, \ldots$) des constantes absolues positives convenablement choisies.

\par

\begin{thm}\label{T3}
Soit  $A>0$. Pour $\varepsilon>0$, $c_0>0$, $c_1>0$ assez petits,  sous les conditions
\begin{align}
\label{ypetit}
 &x \geqslant 2,  \quad 2 \log x < \psi(y) \leqslant \ee^{(\log x)^{\varepsilon}}, \\
\label{e113}
& 2\leqslant  q \leqslant y^{c_0/ \log _2 y},
\end{align}
et pour tout caractère de Dirichlet non principal $\chi$  de module $q$, nous avons
\begin{equation}
\label{e114}
\Upsilon(x,y;\chi) \ll  \Upsilon_q(x,y)  \Big \{   \ee^{-c_1 u/\{ 1+\vartheta(\chi)(\log u)^4 \} }  +Y_{\varepsilon}^{-1} \Big \}
\end{equation}
où $\vartheta(\chi)$ vaut 0 ou 1 et n'est non nul que si $q > (\log y)^A$ et si $\chi$ est égal à un unique caractère exceptionnel, réel.\par 
\end{thm}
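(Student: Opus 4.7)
The natural starting point is to express the character sum through its Dirichlet series. Writing
\begin{equation*}
F(s,\chi,y) := \sum_{n \in U(y)} \frac{\chi(n)}{n^s} = \prod_{\substack{p \leqslant y \\ p\,\nmid\,q}} \frac{1 - \chi(p)^{\nu_p+1} p^{-(\nu_p+1)s}}{1 - \chi(p) p^{-s}},
\end{equation*}
I would apply a saddle-point Perron truncation at abscissa $\sigma = \beta$, where $\beta = \beta(x,y)$ is the saddle point defined by \eqref{e13}. Up to negligible tail contributions at $|t| > T := x^{O(1)}$, this reduces the estimation of $\Upsilon(x,y;\chi)$ to controlling $|F(\beta+it,\chi,y)|$ uniformly over the vertical segment. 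Since the principal contribution to $\Upsilon_q(x,y)$ is delivered by the same Perron integral applied to $Z_q(s,y)$, the task becomes a pointwise comparison of Euler products.

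The second step is to note that, by the Euler product representation and convexity of $-\log|1-\zeta|$ on the unit disc,
\begin{equation*}
\log\left|\frac{F(\beta+it,\chi,y)}{Z_q(\beta,y)}\right| \leqslant -\sum_{\substack{p \leqslant y \\ p\,\nmid\,q}} \frac{1 - \Re\{\chi(p)\,p^{-it}\}}{p^\beta} + O(1).
\end{equation*}
The whole matter is therefore reduced to producing, uniformly in $|t| \leqslant T$, a lower bound for the Mertens-type sum $S(t,\chi) := \sum_{p \leqslant y, p\nmid q}(1 - \Re\{\chi(p)p^{-it}\})/p^\beta$, after which the saddle-point evaluation of $\Upsilon_q(x,y)$ yields the desired ratio.

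To bound $S(t,\chi)$ from below, I would invoke the standard dichotomy according to the size of $|t|$. For $|t|$ ranging up to a suitable power of $\log y$, the classical Landau--Page zero-free region for $L(s,\chi)$, combined with Mertens-type partial summation evaluated at $\sigma = \beta$, furnishes $S(t,\chi) \gg u/\{1+\vartheta(\chi)(\log u)^4\}$: the factor $1+\vartheta(\chi)(\log u)^4$ absorbs the loss incurred at the unique real character supporting a putative Siegel zero, which Page's theorem localises, and Siegel's (ineffective) lower bound for $L(1,\chi)$ permits under the hypothesis $q \leqslant y^{c_0/\log_2 y}$. For the complementary range of $|t|$, a Vinogradov--Korobov zero-free region $\sigma > 1 - c_2/(\log q(|t|+3))^{2/3}(\log_2 q(|t|+3))^{1/3}$ applied in analytic form, coupled with the standard explicit-formula passage from $L'/L$ to prime sums, produces the extra exponential saving of size $Y_\varepsilon^{-1} = \exp(-(\log y)^{3/2-\varepsilon})$. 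Combining the two regimes with the truncation error of Perron's formula and dividing by the saddle-point estimate for $\Upsilon_q(x,y)$ yields \eqref{e114}.

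The main obstacle lies in maintaining full uniformity in $q$ and in $|t|$ throughout the analytic manipulations, particularly in handling the potential Siegel zero: because the saving we seek is exponential in $u$ rather than in $\log x$, the dependence of the zero-free region on $q$ has to be tracked with precision, and the effect on $S(t,\chi)$ of a zero $\beta_1 < 1$ very close to $1$ must be neutralised by the arithmetic input $\omega(q) \ll \sqrt{y}$ (via the factor $g_q(\beta)$ inherited from Theorem \ref{T1}) and by the ineffective Siegel lower bound applied at scale $y^{c_0/\log_2 y}$. The Vinogradov-type bound in the complementary range, although conceptually standard, also requires careful adaptation: the natural variable there is $\log y$, not $\log x$, which is why the saving $Y_\varepsilon^{-1}$ takes the form \eqref{e112} rather than a function of $u$.
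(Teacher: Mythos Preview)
Your overall architecture --- Perron at abscissa $\beta$, bound the twisted Euler product by a prime sum, then invoke zero-free regions --- matches the paper's. The gap lies in your second step. The inequality
\[
\log\bigl|F(\beta+it,\chi,y)/Z_q(\beta,y)\bigr| \leqslant -\sum_{\substack{p\leqslant y\\ p\,\nmid\,q}}\frac{1-\Re\{\chi(p)p^{-it}\}}{p^{\beta}} + O(1)
\]
is valid for the \emph{friable} Euler product $\prod_p(1-\chi(p)p^{-s})^{-1}$, since the $k\geqslant 2$ terms of $-\log(1-w)$ all carry the favourable sign; it fails in the ultrafriable setting because the numerator factors $1-\chi(p)^{\nu_p+1}p^{-(\nu_p+1)s}$ contribute the positive quantity $\sum_{p}\sum_{k\geqslant 1}\{1-\cos((\nu_p+1)k\alpha_p)\}/\{kp^{(\nu_p+1)k\beta}\}$, and this is \emph{not} $O(1)$. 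When $\eta=\psi(y)/\log x-2$ is small it is of order $\pi(y)\log\{1/(1-y^{-\beta})\}\gg u$, hence comparable to your main term $S(t,\chi)$ itself; no convexity statement repairs this. The paper's substitute is Lemma~\ref{l36}: one proves $|Z(\beta+i\tau,\chi;y)|\leqslant Z_q(\beta,y)\,\ee^{-c\,\rW_q}$ with the \emph{quadratic} weight $\rW_q=\sum_p\{1-\Re(\chi(p)p^{-i\tau})\}^2/p^{\beta}$, the argument resting on the exact identity \eqref{e314} which couples numerator and denominator. A Cauchy--Schwarz step then gives $\rW_q\geqslant \rD(y,\tau;\chi)^2/\{(\log y)^2\sum_{p\leqslant y} p^{-\beta}\}$, and Lemmas~\ref{l37}--\ref{l39} (explicit formula for $L'/L$ plus Vinogradov--Korobov) furnish $\rD\gg u\log y/\{1+\vartheta(\chi)(\log u)^2\}$ uniformly for $|\tau|\leqslant Y_{2\varepsilon}$. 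The squaring in Cauchy--Schwarz is precisely what converts $(\log u)^2$ into the $(\log u)^4$ appearing in \eqref{e114}.

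Two smaller corrections. The term $Y_\varepsilon^{-1}$ in \eqref{e114} is purely the Perron truncation error at $T=Y_{2\varepsilon}$, not a contribution from a separate large-$|t|$ regime: the lower bound on $\rW_q$ is uniform over the whole segment $|\tau|\leqslant T$. And the Siegel zero is not handled via $g_q(\beta)$ or Theorem~\ref{T1} --- those concern $\Upsilon_q(x,y)$, not the twisted sum; the exceptional zero enters only through Lemma~\ref{l39}, where it costs a factor $(\log_2 x)^2\asymp(\log u)^2$ in the lower bound for $\rD$.
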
\par 

Le terme principal attendu pour $\Upsilon(x,y;a,q)$ est $\Upsilon_q(x,y)/\varphi(q)$, où $\varphi$ désigne la fonction indicatrice d'Euler. Nous déduisons du Théorème \ref{T3} l'évaluation suivante, valable pour les petites valeurs de $y$.   \par 

\begin{thm}\label{T4}
Soit $\varepsilon >0$. Pour $a\in\Z$, $q\geqslant 1$, $(a,q)=1$, et sous les hypothèses \eqref{ypetit}, \eqref{e113}, nous avons
\begin{equation}
\label{e115}
\Upsilon(x,y;a,q)= \frac{\Upsilon_q(x,y)}{\varphi(q)} \Big \{  1 + O \Big( \ee^{-c_1u/(\log u)^4}  +Y_{\varepsilon} ^{-1} \Big)    \Big \}.
\end{equation}\par 
En l'absence de caractère de Siegel modulo $q$, on peut substituer $u$ à $u/(\log u)^4$ dans le terme d'erreur de \eqref{e115}. 
\end{thm}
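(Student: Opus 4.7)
The plan is to apply the orthogonality of Dirichlet characters modulo~$q$ and then invoke Theorem~\ref{T3} for each non-principal contribution. For $(a,q)=1$, I would write
$$
\Upsilon(x,y;a,q) = \frac{1}{\varphi(q)} \sum_{\chi\,(\bmod\, q)} \overline{\chi(a)}\,\Upsilon(x,y;\chi).
$$
The principal character $\chi_0$ contributes exactly $\Upsilon_q(x,y)/\varphi(q)$, which is the expected main term.

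To handle the remaining $\varphi(q)-1$ characters, I would separate out the possibly exceptional real character $\chi^*$ (for which $\vartheta(\chi^*)=1$) from the other non-principal characters (for which $\vartheta(\chi)=0$). Theorem~\ref{T3} then yields
$$
\sum_{\chi\ne\chi_0}|\Upsilon(x,y;\chi)| \ll \Upsilon_q(x,y) \bigl\{\varphi(q)\,\ee^{-c_1u} + \ee^{-c_1 u/(\log u)^4} + \varphi(q)\,Y_\varepsilon^{-1}\bigr\},
$$
where the middle term is dropped if no Siegel character exists.

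Dividing by $\varphi(q)$ and factoring out $\Upsilon_q(x,y)/\varphi(q)$, the relative error is
$$\varphi(q)\,\ee^{-c_1 u} + \ee^{-c_1 u/(\log u)^4} + \varphi(q)\,Y_\varepsilon^{-1}.$$
The final step, and the only mildly technical point, is to absorb the extra factor~$\varphi(q)$ appearing in the non-exceptional terms. Under \eqref{e113} we have $\log \varphi(q) \leqslant c_0 \log y/\log_2 y$, while \eqref{ypetit} ensures $\log y \leqslant (\log x)^\varepsilon$ and hence $u \geqslant (\log x)^{1-\varepsilon}$. For $c_0$ chosen small enough in terms of $c_1$ and $\varepsilon$, one has $\varphi(q)\,\ee^{-c_1 u} \ll \ee^{-c_1 u/(\log u)^4}$ and, using $(\log y)^{3/2-\varepsilon} \gg \log y/\log_2 y$, also $\varphi(q)\,Y_\varepsilon^{-1} \ll Y_{\varepsilon'}^{-1}$ for a slightly reduced $\varepsilon'>0$; reinjecting $\varepsilon$ for $\varepsilon'$ up to a rechoice of~$\varepsilon$, the relative error is $\ll \ee^{-c_1 u/(\log u)^4} + Y_\varepsilon^{-1}$, as announced in~\eqref{e115}.

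When no exceptional character modulo~$q$ is present, every non-principal $\chi$ satisfies $\vartheta(\chi)=0$, so Theorem~\ref{T3} directly delivers $|\Upsilon(x,y;\chi)| \ll \Upsilon_q(x,y)\bigl\{\ee^{-c_1 u}+Y_\varepsilon^{-1}\bigr\}$; summing over the $\varphi(q)-1$ non-principal characters and dividing by $\varphi(q)$ produces, after the same absorption of the factor~$\varphi(q)$, the improved error obtained by replacing $u/(\log u)^4$ by~$u$. The essential difficulty of the argument lies entirely in Theorem~\ref{T3}; what remains here is the bookkeeping calibration of $c_0,c_1,\varepsilon$ so that the size constraint on $q$ suffices to absorb~$\varphi(q)$ into both the exponential and the $Y_\varepsilon$ savings.
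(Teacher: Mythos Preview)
Your approach is exactly the paper's: orthogonality of characters reduces the problem to bounding $\sum_{\chi\neq\chi_0}\overline{\chi(a)}\Upsilon(x,y;\chi)$, and Theorem~\ref{T3} handles each term. The paper's own proof is a two-line affair that simply says the estimate follows ``imm\'ediatement'' from Theorem~\ref{T3}; you have spelled out the bookkeeping---namely the absorption of the factor $\varphi(q)$ into the error terms via the constraint~\eqref{e113} and the size of $u$ forced by~\eqref{ypetit}---that the paper leaves implicit. Your separation of the exceptional character from the others is harmless but not strictly needed: applying the worst-case bound $\ee^{-c_1 u/(\log u)^4}+Y_\varepsilon^{-1}$ uniformly to every $\chi\neq\chi_0$ and then absorbing $\varphi(q)$ once achieves the same conclusion with one fewer case distinction.
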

Le résultat suivant, dont la démonstration repose sur \eqref{Gra}, fournit une évaluation de $\Upsilon(x,y;a,q)$ pour les grandes valeurs de $y$.

\begin{thm} \label{T5}
Pour une valeur convenable de $c_2>0$, tout $\varepsilon>0$, et sous les conditions $x \geqslant y \geqslant (\log x)^{2+\varepsilon} $, $q\leqslant \sqrt{y}$, et $(a,q)=1$, nous avons
\begin{equation}\label{e116}
\Upsilon(x,y;a,q)= \frac{\Upsilon_q(x,y)}{\varphi(q)} \Big \{  1 + O \Big( \frac{\log q}{u^{c_2}\log y} + \frac{1}{\log y} \Big)    \Big \}.
\end{equation}
\end{thm}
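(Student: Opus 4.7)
The plan is to compare $\Upsilon(x,y;a,q)$ to $\Psi(x,y;a,q)$, invoke Granville's estimate~\eqref{Gra} on the latter, and then convert the result to $\Upsilon_q(x,y)/\varphi(q)$ using Theorem~\ref{T2}. Since $U(x,y)\subset S(x,y)$, we write
$$\Upsilon(x,y;a,q)=\Psi(x,y;a,q)-R(x,y;a,q),$$
where $R(x,y;a,q)$ counts the integers $n\leqslant x$, $n\equiv a\pmod q$, that are $y$-friable but admit a prime power $p^\nu\|n$ with $p\leqslant y$ and $p^\nu>y$ (necessarily $\nu\geqslant2$). The hypothesis $q\leqslant\sqrt y$ implies $y\geqslant q^{1+\varepsilon/2}$ for $\varepsilon$ small, so that \eqref{Gra} applies and supplies $\Psi(x,y;a,q)=\{\Psi_q(x,y)/\varphi(q)\}\{1+O(\log q/(u^c\log y)+1/\log y)\}$.

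Next I would apply Theorem~\ref{T2}: its hypotheses $y\geqslant(\log x)^{2+\varepsilon}$, $P^+(q)\leqslant y$, $\omega(q)\ll\sqrt y$ are all satisfied (the last two since $q\leqslant\sqrt y$ forces $\omega(q)\leqslant\log q\ll\log y\ll\sqrt y$). This gives $\Psi_q(x,y)=\Upsilon_q(x,y)\{1+O(qu\log 2u/(\varphi(q)\sqrt y\log y))\}$, and the error term here is absorbed by $1/\log y$ since $q/\varphi(q)\ll\log_2 y$ and $u\log2u/\sqrt y\ll(\log x)^{-\varepsilon/2}\log_2 x$ under the hypothesis $y\geqslant(\log x)^{2+\varepsilon}$. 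Consequently
$$\Psi(x,y;a,q)=\frac{\Upsilon_q(x,y)}{\varphi(q)}\Big\{1+O\Big(\frac{\log q}{u^c\log y}+\frac1{\log y}\Big)\Big\}.$$

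The hard part is step three, the control of $R(x,y;a,q)$: it must be shown to be $\ll\Upsilon_q(x,y)/(\varphi(q)\log y)$, i.e.\ absorbable in the stated error term. A union bound, parametrizing $n=p^\nu m$ with $p^\nu\|n$, $p\leqslant y$, $p^\nu>y$, and $(p,q)=1$ (since $(a,q)=1$), gives
$$R(x,y;a,q)\leqslant\sum_{\substack{p\leqslant y,\,\nu\geqslant 2\\ p^\nu>y,\,(p,q)=1}}\Psi(x/p^\nu,y;a\overline{p^\nu},q).$$
Applying \eqref{Gra} to each summand when $x/p^\nu$ is not too small (and using the trivial bound $\ll x/(p^\nu q)+1$ otherwise) produces $\ll\{\Psi_q(x/p^\nu,y)/\varphi(q)\}(1+o(1))$. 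Comparison with $\Psi_q(x,y)$ then proceeds through the Hildebrand--Tenenbaum-type local estimate $\Psi_q(x/p^\nu,y)\ll\Psi_q(x,y)/p^{\nu\alpha}$ available in the range $y\geqslant(\log x)^{2+\varepsilon}$. The crux reduces to showing that
$$\sum_{\substack{p\leqslant y,\,\nu\geqslant 2\\ p^\nu>y}}\frac1{p^{\nu\alpha}}\ll\frac1{\log y},$$
which follows by fixing $p$ and summing a geometric series from $\nu=\nu_p+1$, yielding $\sum_p y^{-\alpha}/\{p^\alpha-1\}\ll\pi(y)/(y^\alpha\log y)$, together with the corresponding contribution of primes $p>\sqrt y$ at $\nu=2$. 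Since $y^\alpha$ has the right order of magnitude (this is exactly where Theorem~\ref{T2}'s defect estimate, echoing~\eqref{thm2}, intervenes), the required bound on~$R$ follows, and the combination with the above evaluation of $\Psi(x,y;a,q)$ yields \eqref{e116}.

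The main technical obstacle is the bookkeeping in the last step: ensuring the saddle-point comparison $\Psi_q(x/p^\nu,y)\ll\Psi_q(x,y)/p^{\nu\alpha}$ is valid uniformly for all relevant $(p,\nu)$ (in particular when $p^\nu$ approaches $x$, where Granville's estimate may fail and one must resort to trivial bounds), and verifying that no $q/\varphi(q)$ or $\log_2 y$ factor spoils the target error term $1/\log y$.
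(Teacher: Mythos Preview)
Your plan is correct and follows the paper's own route: bound $\Psi(x,y;a,q)-\Upsilon(x,y;a,q)$ by a sum over offending prime powers, apply Granville's estimate~\eqref{Gra} termwise, use the local comparison $\Psi_q(x/d,y)\ll\Psi_q(x,y)/d^\alpha$, and finish with Theorem~\ref{T2}. The paper's execution is cleaner on exactly the point you flag as the main obstacle: rather than parametrising by the exact power $p^\nu\Vert n$, it uses only that $p^{\nu_p+1}\mid n$, giving
\[
\Psi(x,y;a,q)-\Upsilon(x,y;a,q)\leqslant\sum_{\substack{p\leqslant y\\ p\,\nmid\, q}}\Psi\Big(\frac{x}{p^{\nu_p+1}},y;a/p^{\nu_p+1},q\Big).
\]
Since $p^{\nu_p+1}\leqslant py\leqslant y^2$, the dichotomy $y\leqslant x^{1/3}$ versus $x^{1/3}<y\leqslant x$ settles the range question at once: in the first case $x/p^{\nu_p+1}\geqslant x^{1/3}\geqslant y$, so~\eqref{Gra} and the local bound~\eqref{e53} apply uniformly and one lands directly on the single sum $S=\sum_p p^{-(\nu_p+1)\alpha}\ll y^{1/2-\alpha}/\log y$ already computed in~\eqref{e55}; in the second case $u\leqslant 3$ and the trivial bound $\sum_p x/p^{\nu_p+1}\ll x/(\sqrt y\log y)$ suffices. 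This sidesteps both the geometric $\nu$-sum and any case analysis on the size of $x/p^\nu$. Incidentally, your displayed bound $\sum_p y^{-\alpha}/(p^\alpha-1)\ll\pi(y)/(y^\alpha\log y)$ is not $\ll 1/\log y$ when $y$ is near $(\log x)^{2+\varepsilon}$ (it is $\asymp u\log 2u/(\log y)^2$); the split at $\sqrt y$ you allude to is what makes it work, and the small-prime part should carry $\pi(\sqrt y)$ rather than $\pi(y)$.
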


\begin{rem} Dans le domaine de validité commun à \eqref{e115} et \eqref{e116}, le terme d'erreur de \eqref{e115} est le plus précis des deux.  La discontinuité qualitative observée d'un domaine à l'autre reflète la disparité des méthodes employées.
\end{rem}

\begin{rem}\label{r6} Ainsi qu'évoqué plus haut, les démonstrations des Théorèmes \ref{T3}, \ref{T4} et \ref{T5} peuvent être adaptées pour traiter le cas  $d:=(a,q)>1$. Supposons par exemple $\mu(d)^2=1$ et $(q/d,d)=1$. Pour tout $\varepsilon >0$ et sous les conditions \eqref{ypetit} et \eqref{e113}, nous avons alors
$$  \Upsilon(x,y;a,q)= \frac{h_d(\beta)\Upsilon_{q/d}\big(x/d,y \big)}{\varphi \big(q/d \big)}  \Big \{  1+ O \Big ( \ee^{-c_1u/(\log u)^4}  +Y_{\varepsilon} ^{-1}  \Big)  \Big \}      $$
où $$ h_d(s):=  \prod_{p\, \mid  \, d}\frac{1-p^{-\nu_p s}}{1-p^{-(\nu_p+1)s}}   \quad ({\Re}e \ s>0).    $$
\end{rem}

\section{Cols} \label{Pc}

Pour $v>1$, désignons par $\xi(v)$ l'unique solution de l'équation $\ee^{\xi(v)}=1+v \xi(v)$, et  convenons que $\xi(1):=0$. Nous avons 
\begin{equation}\label{xi}
\xi(v) = \log (v\log v) +O\Big(\frac{\log_2v}{\log v}\Big) \qquad  (v\geqslant 3),
\end{equation}
et renvoyons par exemple aux articles \cite{HT93, HT93a} ou à \cite[ch. III.5]{ten} pour une description plus précise du comportement asymptotique de $\xi(v)$ lorsque $v\to\infty$. \par Posons encore  
\begin{equation}\label{e21}
  L_{\varepsilon}(y):= \ee^{(\log y)^{3/5-\varepsilon}}  \qquad( \varepsilon >0,\   y \geqslant 2  ).  
  \end{equation} 
 Il est établi dans \cite{HT} que, pour tout $\varepsilon >0$, nous avons
\begin{equation}\label{alpha}
\alpha=1-\frac{\xi(u)}{\log y} + O \Big(   \frac{1}{u(\log y)^2}+\frac{1}{L_{\varepsilon}(y)}  \Big) \qquad \big( (\log x)^{1+\varepsilon}<y\leqslant x  \big).
\end{equation} 

Les estimations suivantes du col $\beta$ ont été obtenues dans \cite{T15}.

\begin{lem} Soit $\varepsilon >0$. Nous avons
\begin{equation}\label{beta}
\beta(x,y)=  \begin{cases} \displaystyle
1-\frac{\xi(u)}{\log y}+O \Big( \frac{1}{u(\log y)^2}+\frac{1}{L_{\varepsilon}(y)} \Big) & \big( (\log x)^{1+\varepsilon}<y\leqslant x  \big) ,\\ \noalign{\vskip1mm}
\displaystyle \frac{1+O \big( 1/\log y \big)}{\log y} \log  \big(1+\eta \big)  &  \big( 2 \log x<  \psi( y) \ll (\log x)^3 \big) .
\end{cases}  
\end{equation}
\end{lem}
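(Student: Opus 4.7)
The plan is to analyse the saddle-point equation $\varphi_1(\sigma,y)=\log x$ through the decomposition
$$\varphi_1(\sigma,y)=\widetilde\varphi(\sigma,y)-E(\sigma,y),$$
where $\widetilde\varphi(\sigma,y):=\sum_{p\leqslant y}(\log p)/(p^\sigma-1)$ is precisely the function defining the classical Hildebrand--Tenenbaum saddle point $\alpha$ via \eqref{defalpha}, and $E(\sigma,y):=\sum_{p\leqslant y}(\nu_p+1)(\log p)/(p^{(\nu_p+1)\sigma}-1)$ plays the role of a correction term whose size is controlled via the lower bound $p^{\nu_p+1}>y$ valid by definition of $\nu_p$.

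For the regime $(\log x)^{1+\varepsilon}<y\leqslant x$, I would view $\beta$ as a perturbation of $\alpha$. The bound $p^{\nu_p+1}>y$ gives $E(\sigma,y)\ll y^{1-\sigma}$, which, combined with the standard estimate $\partial_\sigma\widetilde\varphi(\alpha,y)\asymp-u(\log y)^2/\xi(u)$ from \cite{HT} and a one-step Newton iteration from $\alpha$, yields $\beta-\alpha=O(E(\alpha,y)/|\widetilde\varphi'(\alpha,y)|)$. A routine calculation shows this correction is absorbed in the claimed error terms, so \eqref{alpha} directly produces the first line of \eqref{beta}, the $1/L_\varepsilon(y)$ contribution being inherited unchanged from the zero-free region estimates underlying \eqref{alpha}.

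In the range $2\log x<\psi(y)\ll(\log x)^3$, the saddle point $\beta$ is small, with $\beta\log y\ll\log_2 x$, and a perturbative approach is no longer available. Here I would Taylor-expand each summand of $\varphi_1(\sigma,y)$ in the variable $a:=\sigma\log p$, using the identity
$$\frac{\log p}{p^\sigma-1}-\frac{(\nu_p+1)\log p}{p^{(\nu_p+1)\sigma}-1}=\tfrac12\nu_p\log p-\tfrac1{12}\nu_p(\nu_p+2)\sigma(\log p)^2+O\bigl(\sigma^2\nu_p^3(\log p)^3\bigr)$$
valid for $a\ll1$, together with the explicit reduction to $(\log p)/(p^\sigma+1)$ for primes with $\nu_p=1$ where $p^\sigma$ may be larger. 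Summing over $p\leqslant y$ via Chebyshev-type estimates for $\sum\nu_p^k(\log p)^k$, matching the resulting asymptotic for $\varphi_1(\sigma,y)$ with $\log x$, and using $\psi(y)=(2+\eta)\log x$, one is led to an equation whose inversion produces $\beta=\log(1+\eta)\{1+O(1/\log y)\}/\log y$.

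The principal technical difficulty lies in achieving uniformity across the full second regime, where $\beta\log y$ may vary from infinitesimal (when $\eta\to0^+$) up to order $\log_2 x$ (when $\eta\asymp(\log x)^2$), so that $p^\beta$ straddles the transition $p\asymp\ee^{1/\beta}$ as $p$ ranges over primes up to $y$. A careful dyadic decomposition of the sum defining $\varphi_1$ around this threshold, separating the small-$a$ Taylor regime from the large-$a$ regime treated by direct majoration, is what secures the multiplicative correction $1+O(1/\log y)$ in the stated asymptotic.
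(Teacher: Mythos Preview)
The paper does not supply a proof of this lemma: the sentence immediately preceding it, ``Les estimations suivantes du col $\beta$ ont \'et\'e obtenues dans \cite{T15}'', indicates that both formulas of \eqref{beta} are quoted from \cite{T15}, and no argument is reproduced here. There is therefore nothing in the present paper to compare your sketch against.

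Your outline is broadly the natural one and is in the spirit of how such saddle-point estimates are typically derived. A couple of points would need to be tightened before it counts as a proof. First, your claimed bound $E(\sigma,y)\ll y^{1-\sigma}$ is too optimistic in the lower part of the first range: when $y$ is close to $(\log x)^{1+\varepsilon}$ one has $\alpha$ close to $\varepsilon/(1+\varepsilon)$, so $2\alpha<1$ and the contribution of primes $p\in(\sqrt y,y]$ (for which $\nu_p=1$ and the summand is $\asymp(\log p)/p^{2\sigma}$) is of order $y^{1-2\alpha}/(1-2\alpha)$ rather than $O(1)$; the Newton-step correction $E(\alpha,y)/|\widetilde\varphi'(\alpha,y)|$ then has to be re-examined to confirm it is still absorbed by the error term $1/(u(\log y)^2)$. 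Second, in the second regime your Taylor identity is only valid for primes with $\sigma\log p$ small, whereas for $\eta$ large (say $\eta\asymp(\log x)^2$) one has $\beta\log y\asymp\log_2 x$ and the bulk of primes satisfy $\sigma\log p\gg1$; your dyadic splitting around $p\asymp\ee^{1/\beta}$ is the right idea, but the claim that the large-$a$ part contributes only to the $O(1/\log y)$ factor requires a genuine computation rather than ``direct majoration''. These are not fatal gaps, but they are the places where the actual work of \cite{T15} lies.
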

\par

Compte tenu de \eqref{xi}, nous pouvons noter, à fins de référence ultérieure, que 
\begin{equation}\label{1-beta}
     1-\beta \asymp  \frac{\log ( 2 u)}{\log y} \qquad (\psi(y) >2 \log x  )  .         
\end{equation}

\begin{lem}
Uniformément pour $x\geqslant y\geqslant 2$, $\psi(y)>2 \log x$, nous avons
\begin{align}
&\frac{y^{1-\beta}-1}{1-\beta} \asymp \log x,\label{y(1-beta)} \\
&y^{1-\beta}\asymp u \log 2u \label{y}.
\end{align}
\end{lem}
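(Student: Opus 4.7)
The plan is to establish \eqref{y} first by case analysis following the two regimes of \eqref{beta}, and then to deduce \eqref{y(1-beta)} from \eqref{y} combined with \eqref{1-beta}.

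For \eqref{y} in the range $y \geqslant (\log x)^{1+\varepsilon}$, the first estimate in \eqref{beta} yields
$$
(1-\beta)\log y = \xi(u) + O\Big(\frac{1}{u\log y} + \frac{\log y}{L_{\varepsilon}(y)}\Big),
$$
and both error terms are $O(1)$ uniformly (since $u\log y = \log x$ and $L_{\varepsilon}(y)$ eventually dwarfs $\log y$ in this range). Hence $y^{1-\beta} \asymp \ee^{\xi(u)}$, and the defining identity $\ee^{\xi(u)} = 1 + u\xi(u)$ together with \eqref{xi} gives $y^{1-\beta} \asymp 1 + u\xi(u) \asymp u\log 2u$ uniformly for $u \geqslant 1$, with the behaviour near $u = 1$ handled by the continuity and strict positivity of both $1 + u\xi(u)$ and $u\log 2u$ on compact intervals.

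For the complementary range $y < (\log x)^{1+\varepsilon}$ one falls within the domain of the second estimate in \eqref{beta}, giving $\beta\log y = (1+O(1/\log y))\log(1+\eta)$. Here $\log y \asymp \log_2 x$ (because $\psi(y) > 2\log x$ forces $y \geqslant (2-o(1))\log x$, while $y \leqslant (\log x)^{1+\varepsilon}$), so $\log(1+\eta)/\log y = O(1)$ and hence $y^{\beta} \asymp 1+\eta$, whence $y^{1-\beta} \asymp y/(1+\eta)$. Writing $1+\eta = (\psi(y) - \log x)/\log x$ and invoking the prime number theorem $\psi(y) = y + O(y/(\log y)^2)$, together with the lower bound $\log x/y \leqslant 1/2 + o(1)$, we obtain
$$
\frac{y}{1+\eta} = \frac{y\log x}{\psi(y) - \log x} \asymp \log x.
$$
Combining this with the identity $u\log 2u \asymp (\log x/\log_2 x) \cdot \log_2 x = \log x$ (valid because $\log 2u \asymp \log_2 x$ in this sub-range) completes the proof of \eqref{y}.

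For \eqref{y(1-beta)}, observe that \eqref{1-beta} provides $(1-\beta)\log y \asymp \log 2u \geqslant \log 2 > 0$, so $y^{1-\beta} = \ee^{(1-\beta)\log y}$ is bounded below by an absolute constant strictly exceeding $1$. Consequently $y^{1-\beta} - 1 \asymp y^{1-\beta}$, and combining this with \eqref{y} and \eqref{1-beta} yields
$$
\frac{y^{1-\beta} - 1}{1-\beta} \asymp \frac{y^{1-\beta}}{1-\beta} \asymp u\log 2u \cdot \frac{\log y}{\log 2u} = u\log y = \log x.
$$
The main obstacle is the second case of \eqref{y}: converting $\psi(y)$ to $y$ via the prime number theorem while carefully tracking that $\log y \asymp \log_2 x$ is what makes the two sides of the claimed asymptotic agree, and the error $(1+\eta)^{O(1/\log y)}$ stemming from the imprecise formula for $\beta$ must be shown to remain $\asymp 1$; the first case and the deduction of \eqref{y(1-beta)} are then formal manipulations.
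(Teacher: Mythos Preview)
Your proof is correct. It differs from the paper's in two respects. First, the order of derivation is reversed: the paper establishes \eqref{y(1-beta)} first and then notes that \eqref{y} is an immediate consequence, whereas you prove \eqref{y} directly and deduce \eqref{y(1-beta)} from it via \eqref{1-beta}. Second, and more substantively, the paper's argument for \eqref{y(1-beta)} is by citation---invoking lemma~3 of \cite{HT} for $y\geqslant(\log x)^2$ and formula~(2.16) of \cite{T15} for $\psi(y)\ll(\log x)^3$---while you work entirely from the two-regime formula \eqref{beta}, the identity $\ee^{\xi(u)}=1+u\xi(u)$, and the prime number theorem. Your approach is therefore more self-contained within the present paper; the paper's is terser but relies on external estimates. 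The case splits also differ slightly (the paper cuts at $(\log x)^2$ with overlap up to $(\log x)^3$, you cut at $(\log x)^{1+\varepsilon}$), but both are compatible with the validity ranges in \eqref{beta}.
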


\begin{proof}
Lorsque $y \geqslant (\log x)^2$, l'estimation \eqref{y(1-beta)} résulte du lemme 3 de \cite{HT} et de la première formule  \eqref{beta}. Sous la condition $2 \log x < \psi(y) \ll(\log x)^3$, elle découle de l'estimation (2.16) de \cite{T15} et de \eqref{1-beta} sous la forme $(1-\beta)\asymp 1$. La relation \eqref{y} est une conséquence immédiate. 
\end{proof}

\section{Lemmes}

\subsection{Majoration du terme d'erreur $D_q$}

\begin{lem}\label{Eq}
Soit $\varepsilon>0$. Sous la condition \eqref{e161}, nous avons 
\begin{equation}\label{mEq}
D_q \ll \frac{\sqrt{u}}{(\log 2u)^{1/2+\varepsilon}}\cdot
\end{equation}
\end{lem}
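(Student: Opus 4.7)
The strategy is to bound $D_q=\min\{\omega(q),\Delta_q\}$ by exploiting the explicit formula \eqref{e15} for $\Delta_q$ in most regimes, falling back on the raw bound $D_q\leqslant\omega(q)$ whenever $\Delta_q$ is distorted by a small value of $\eta$. The key preparatory step is to translate \eqref{e161} into a useful upper bound on $\vartheta_q$: by the prime number theorem, $z_q=p_{\omega(q)}\asymp \omega(q)\log\{2+\omega(q)\}$, hence $\log z_q\leqslant \log\omega(q)+O(\log_2(2+\omega(q)))$, which combined with \eqref{e161} and $\log_2\omega(q)\leqslant \log_2 y$ yields
$$\vartheta_q\leqslant \tfrac12-\frac{(1+\varepsilon)\log_2 u}{\log u}+O\Big(\frac{\log_2 y}{\log y}\Big).$$
The constraint $\psi(y)>2\log x$ and Chebyshev's estimate give $y\gg u\log y$, whence $\log y\geqslant \log u+\log_2 y+O(1)$, and monotonicity of $t\mapsto (\log t)/t$ implies $\log_2 y/\log y\ll \log_2 u/\log u$. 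Absorbing this into a slightly smaller~$\varepsilon$ produces $u^{\vartheta_q}\ll \sqrt u/(\log u)^{1+\varepsilon/2}$. When $u=O(1)$, the conclusion is trivial since then $\Delta_q\ll 1$ by direct inspection of \eqref{e15}, so I assume $u\to\infty$ in what follows.

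\smallskip
\noindent\textbf{Case $y>(\log x)^2$.} I apply the second branch of \eqref{e15} and use the elementary inequality $\vartheta_q/(1+\vartheta_q\log 2u)\leqslant 1/\log 2u$ together with $(u\log 2u)^{\vartheta_q}\leqslant u^{\vartheta_q}(\log 2u)^{1/2}$. Combined with the bound on $u^{\vartheta_q}$, this yields
$$\Delta_q\ll \frac{\sqrt u\,(\log 2u)^{1/2}}{(\log 2u)(\log u)^{1+\varepsilon/2}}\ll \frac{\sqrt u}{(\log 2u)^{3/2+\varepsilon/2}},$$
much stronger than needed.

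\smallskip
\noindent\textbf{Case $\psi(y)\ll (\log x)^2$.} Here $\log y\asymp \log_2 x\asymp \log u$, and I split $\Delta_q$ into the two summands appearing in the first branch of \eqref{e15}. The first, $(\log x)^{\vartheta_q}/\log y=u^{\vartheta_q}(\log y)^{\vartheta_q-1}$, is at most $\sqrt u/(\log u)^{3/2+\varepsilon/2}$ by the previous paragraph. For the second summand $(\log x)^{\vartheta_q}/\{\vartheta_q\log y\log(1+\eta)\}$, I distinguish the size of $\eta$. When $\eta\geqslant 1$, one has $\log(1+\eta)\gg 1$ and $\vartheta_q\geqslant \log 2/\log y\asymp 1/\log u$, hence $1/\{\vartheta_q\log(1+\eta)\}\ll \log u$, giving the acceptable bound $\sqrt u/(\log u)^{1/2+\varepsilon/2}$. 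When $\eta<1$, Chebyshev yields $y\ll \log x\asymp u\log u$, so $\sqrt y\ll \sqrt{u\log u}$, and \eqref{e161} collapses into $\omega(q)\ll \sqrt{u\log u}/(\log u)^{1+\varepsilon}=\sqrt u/(\log u)^{1/2+\varepsilon}$, which already bounds $D_q\leqslant \omega(q)$ as required.

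\smallskip
\noindent\textbf{Main obstacle.} The delicate point is coordinating $\omega(q)$ and $\Delta_q$ in the regime $\psi(y)\ll (\log x)^2$ with $\eta$ small, where $\Delta_q$ diverges like $1/\eta$ and the $\omega(q)$-bound must take over. The crucial observation is that small $\eta$ constrains $y$ to be of order $\log x$, precisely the range where \eqref{e161} is sharp enough to control $\omega(q)$ on its own.
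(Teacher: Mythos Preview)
Your proof is correct and follows essentially the same strategy as the paper's: translate \eqref{e161} into a bound $\vartheta_q\leqslant \tfrac12-(1+\varepsilon')\log_2 u/\log u$, feed this into the explicit formula \eqref{e15} for $\Delta_q$, and fall back on $D_q\leqslant\omega(q)$ when $\eta$ is small. The only difference is organizational: the paper splits into $\eta\leqslant 1$ versus $\eta>1$ (invoking Remark~\ref{remsDqEq}(a) to get $D_q\asymp\omega(q)$ in the former case and substituting the $\vartheta_q$ bound into \eqref{e15} in the latter), whereas you split first according to the two branches of \eqref{e15} and then, within the small-$y$ branch, by the size of $\eta$. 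Your treatment is more explicit in the computations but otherwise equivalent.
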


\begin{proof}
Lorsque $\eta\leqslant 1$, l'estimation $D_q\asymp \omega(q)$ résulte de la Remarque~\ref{remsDqEq}(a). Comme cette hypothèse implique $y\ll \log x$ et donc $\log y \asymp \log 2u$, il suit
$$   D_q\asymp \omega(q) \ll (u\log y)^{1/2-(1+\varepsilon)(\log_2 2u)/\log 2u} \ll \frac{\sqrt{u}}{(\log 2u)^{1/2+\varepsilon}} .   $$\par 
Lorsque $\eta>1$, la condition \eqref{e161} équivaut à  $$\vartheta_q\leqslant \frac{1}{2}-(1+\varepsilon)\frac{\log_2 (2u)}{\log 2u}+O \Big( \frac{1}{\log y} \Big) , $$ 
d'où \eqref{mEq}, en reportant dans \eqref{e15}.
\end{proof}

\subsection{Estimations relatives à la fonction $g_q(s)$}

Posons 
$$ \gamma_q(s):= \log g_q(s)=\sum_{p\, | \, q}\big\{  \log \big(1-p^{-s} \big)  -\log\big(1-p^{-(\nu_p+1)s}\big)\big\} \qquad (\Re e\, s>0),$$
où, dans le membre de droite, les logarithmes complexes sont pris en détermination principale. Introduisons également la quantité 
\begin{equation}
\label{defCq}
C_q:= \min \{  \omega(q),\Delta_q^2 \}
\end{equation}
et, dans toute la suite, convenons que $u_0$ désigne une constante absolue assez grande.

\begin{lem}\label{l32}\
 Sous les conditions \eqref{e161} et
\begin{equation}\label{e32a}
\psi(y)>2\log x,  \quad u\geqslant u_0,
\end{equation}
nous avons
\begin{equation}\label{e32}
\gamma_q'(\beta) \ll D_q \log y, \qquad  -\gamma_q''(\beta) \ll C_q(\log y)^2.
\end{equation} 
Sous l'hypothèse supplémentaire $\eta \ll 1$, nous avons 
\begin{equation} \label{e33}
 \begin{aligned} 
\displaystyle \gamma_q'(\beta) &= \tfrac{1}{2} \omega(q) \log y  +O \big( \eta \omega(q) \log y + \log q \big)    ,    \\
\displaystyle \gamma_q''(\beta) &= \tfrac{1}{12} \omega(q) (\log y)^2 + O \big(  \eta \omega(q) (\log y)^2  +(\log q) \log y  \big).
 \end{aligned}
 \end{equation}
\end{lem}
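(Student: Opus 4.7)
The plan is to differentiate $\gamma_q(s)$ termwise and exploit the Bernoulli expansion $x/(\ee^x-1)=1-x/2+x^2/12-x^4/720+O(x^6)$ to analyse the resulting sums over $p\mid q$. From the definition,
\begin{equation*}
\gamma_q'(s)=\sum_{p\mid q}H_p(s)\log p,\qquad H_p(s):=\frac{1}{p^s-1}-\frac{\nu_p+1}{p^{(\nu_p+1)s}-1},
\end{equation*}
and $\gamma_q''(s)=\sum_{p\mid q}H_p'(s)\log p$. A short manipulation yields $H_p(s)=\sum_{k=1}^{\nu_p}(p^{ks}-1)/(p^{(\nu_p+1)s}-1)$, which is non-negative for $s>0$ and bounded by $2p^{-s}$ whenever $p^s\geqslant 2$. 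For $(\nu_p+1)s\log p\ll 1$, the Bernoulli expansion above, applied with arguments $s\log p$ and $(\nu_p+1)s\log p$, gives the local formula
\begin{equation*}
H_p(s)=\tfrac{\nu_p}{2}-\frac{\bigl((\nu_p+1)^2-1\bigr)s\log p}{12}+O\bigl((\nu_p+1)^4(s\log p)^3\bigr),
\end{equation*}
together with the analogous expansion of $H_p'(s)$ obtained by termwise differentiation.

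For the asymptotic formulas \eqref{e33}, the extra hypothesis $\eta\ll 1$ combined with the second line of \eqref{beta} gives $\beta\log y\asymp\log(1+\eta)\asymp\eta$, hence $\beta\log p\ll\eta\ll 1$ uniformly for every $p\mid q$. The expansion above then applies to every term, and summing, together with the elementary relation $\nu_p\log p=\log y+O(\log p)$ (immediate from $\nu_p=\lfloor\log y/\log p\rfloor$), produces the main term $\tfrac12\omega(q)\log y$ for $\gamma_q'(\beta)$. The error comes from the defect $\nu_p\log p-\log y$ (contributing $O(\log q)$ once summed) and from the linear correction, bounded via $((\nu_p+1)\log p)^2\ll(\log y)^2$ by $\beta\omega(q)(\log y)^2\ll\eta\omega(q)\log y$. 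The formula for $\gamma_q''(\beta)$ is obtained identically after one additional differentiation of the expansion.

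For the general upper bounds \eqref{e32}, I would first note the trivial estimates $\gamma_q'(\beta)\ll\omega(q)\log y$ and $|\gamma_q''(\beta)|\ll\omega(q)(\log y)^2$, which follow by combining the expansion (when $\beta\log p\leqslant 1$) with the direct estimate $H_p(\beta)\ll p^{-\beta}$ and its analogue for $H_p'$ (when $\beta\log p>1$); these account for the $\omega(q)$ alternative in the definitions of $D_q$ and $C_q$. To reach the refinements $\Delta_q\log y$ and $\Delta_q^2(\log y)^2$, the standard monotonicity argument applies: the sums being over non-negative quantities, the worst case is $q=\prod_{p\leqslant z_q}p$, reducing matters to estimating the partial Mertens-type sums $\sum_{p\leqslant z_q}\log p/(p^\beta-1)$ and $\sum_{p\leqslant z_q}(\log p)^2 p^\beta/(p^\beta-1)^2$. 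These are evaluated via the saddle-point estimates \eqref{beta}, \eqref{1-beta}, \eqref{y(1-beta)} and \eqref{y}, splitting the regime $y\leqslant(\log x)^2$ from $y>(\log x)^2$ in accordance with the bifurcated definition \eqref{e15} of $\Delta_q$; in the range $y>(\log x)^{1+\varepsilon}$ the shortcut \eqref{majDeltaq} from \cite{RT}, based on the closeness of $\beta$ to $\alpha$ (Remark~\ref{remsDqEq}(b)), applies.

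The main obstacle is to pair each branch of the definition \eqref{e15} of $\Delta_q$ with the correct behaviour of $\beta$ and confirm that the partial Mertens sums condense into exactly the expressions appearing there, particularly across the transition $y\approx(\log x)^2$, where both the formula for $\beta$ and that for $\Delta_q$ switch regime. The quadratic improvement in the second estimate of \eqref{e32}, where $C_q$ involves $\Delta_q^2$ rather than $\Delta_q$, is the other delicate point: it should emerge from the extra power of $\log p$ in the summand for $\gamma_q''$, which amplifies the contribution of the larger primes in the range $[2,z_q]$ and thereby effectively squares the parameter $\Delta_q$.
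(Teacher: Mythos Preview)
Your approach is essentially that of the paper. The only notable differences are that the paper obtains the $\omega(q)$ bounds by invoking the monotonicity of $\gamma_q'$ and $-\gamma_q''$ on $]0,\infty[$ (so that $\gamma_q'(\beta)\leqslant\gamma_q'(0^+)=\tfrac12\sum_{p\mid q}\nu_p\log p$, and similarly for the second derivative) rather than your case split on $\beta\log p$, and that for the $\Delta_q$ bounds it quotes the ready-made majorations of \cite[lemmes~3.13--3.14]{RT} in place of recomputing the Mertens-type sums over $p\leqslant z_q$; your derivation of \eqref{e33} via the local expansion at small $\beta$ is in fact more explicit than the paper's, which leaves that part implicit in the Taylor expansion at $\sigma=0$.
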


\begin{proof} 
Ainsi qu'il a été observé au lemme 3.13 de \cite{RT}, les fonctions $\gamma'_q$ et $-\gamma''_q$ sont décroissantes et positives sur $]0,\infty[$. Nous avons
\begin{equation*}
\begin{aligned}
\gamma'_q(\sigma) &= \sum_{p \, \mid \, q} \Big \{  \frac{\log p}{p^{\sigma}-1} - \frac{(\nu_p+1)\log p}{p^{(\nu_p+1)\sigma}-1} \Big \}, \\
-\gamma''_q(\sigma) &= \sum_{p \, \mid \, q} \Big \{  \frac{(\log p)^2 p^{\sigma}}{(p^{\sigma}-1)^2} - \frac{(\nu_p+1)^2 (\log p)^2 p^{(\nu_p+1)\sigma}}{(p^{(\nu_p+1)\sigma}-1)^2} \Big \}.
\end{aligned} 
\end{equation*}
Un développement limité à l'ordre 3 en 0 fournit donc
\begin{equation*}
\begin{aligned}
\gamma'_q(0)&=\tfrac12\sum_{p\, | \, q}\nu_p\log p\ll \omega(q)\log y,    \\
-\gamma''_q(0)&=\tfrac1{12}\sum_{p \, |  \, q}\nu_p(\nu_p+2)(\log p)^2\ll\omega(q)(\log y)^2 .
\end{aligned}
\end{equation*}
Pour achever la preuve de \eqref{e32}, il reste  à montrer que 
\begin{equation}
\label{e35}
\gamma_q^{(j)}(\beta)\ll (\Delta_q\log y)^j \qquad (j=1,2).
\end{equation}
\par 
À cette fin, nous pouvons supposer que $\eta>1$, puisque, dans le cas contraire, nous avons $\Delta_q\gg 1+\omega(q)$, ainsi qu'observé à la Remarque  \ref{remsDqEq}(a). Nous utiliserons les majorations  
\begin{equation}
\label{majgam12}
\gamma_q'(\sigma) \ll \frac{z_q^{1-\sigma}-1}{(1-z_q^{-\sigma})(1-\sigma)}, \quad -\gamma_q''(\sigma) \ll \frac{(z_q^{1-\sigma}-1)\log z_q}{(1-z_q^{-\sigma})^2(1-\sigma)}  \qquad (0<\sigma<1), 
\end{equation}
établies au lemme 3.13 de \cite{RT}.
\par 
Considérons d'abord le cas $y \leqslant (\log x)^2$. Parallèlement à la preuve du  lemme 3.15 de \cite{RT}, nous observons que les relations \eqref{majgam12}, \eqref{beta}, \eqref{1-beta} et \eqref{y} impliquent 
\begin{equation*}
\begin{aligned}
\gamma_q'(\beta)  &\ll  y^{(1-\beta)\vartheta_q} \Big(1+\frac{1}{\beta\log z_q}\Big) \ll (\log x)^{\vartheta_q}\Big(1+\frac{1}{\vartheta_q\log (1+\eta)}  \Big)=\Delta_q\log y,\\
-\gamma_q''(\beta) &\ll (\log x )^{\vartheta_q}\Big( 1+ \frac{1}{ \beta\log z_q}  \Big)^2 \vartheta_q  \log y=(\Delta_q \log y)^2 \frac{\vartheta_q \log y}{(\log x)^{\vartheta_q}}\ll (\Delta_q \log y)^2,
\end{aligned}
\end{equation*}
ce qui fournit bien \eqref{e35}.

Lorsque $y > (\log x)^2$,  compte tenu de la Remarque \ref{remsDqEq}(b), l'estimation \eqref{e35} est une conséquence directe de \cite[lemme~3.14]{RT} sous la forme
$$\gamma_q^{(j)}(\beta)\ll \Delta_q\vartheta_q^{j-1}(\log y)^j\quad(j=1,2)$$ et de la majoration $\vartheta_q \ll \Delta_q$, énoncée à la formule (2.10) du même article.\end{proof}

\subsection{Estimations impliquant les séries $Z_q(s,y)$}
Posons
\begin{equation}\label{phij}
\varphi_{j,q}(s,y):=(-1)^{j} \frac{\dd^{j-1}}{\dd \sigma^{j-1}}\frac{Z_q'(s,y)}{Z_q(s,y)},\qquad \sigma_{j,q}:= \varphi_{j,q}(\beta,y) \qquad (j \geqslant 1, q  \geqslant 1),
\end{equation}
et convenons d'omettre le second indice lorsque $q=1$.
 Le lemme suivant est une adaptation relative aux quantités \eqref{phij}  de \cite[lemme 2.5]{T15}, correspondant au cas $q=1$.
\begin{lem}\label{l33}
Soit $j \in \mathbb{N}^*$. Sous les conditions \eqref{e161} et \eqref{e32a}, nous avons
\begin{equation}\label{e37}
\sigma_{j,q} \ll u(\log y)^j.
\end{equation}
Lorsque $j=3$, on peut multiplier le membre de droite par $\min (1,\beta \log y)+1/\sqrt{u}$. De plus, dans les mêmes hypothèses,
\begin{equation}
\label{sig2q}
\sigma_{2,q}=\sigma_2\Big\{1+O\Big(\frac1{\log u}\Big)\Big\}.
\end{equation}
Sous les conditions supplémentaires $q=1$ et $\psi(y)\ll (\log x)^3$, nous avons 
\begin{equation}\label{e38}
\sigma_2= \Big \{ 1+ O \Big(  \frac{1}{\log y} \Big)   \Big\}\frac{1+\eta}{2+\eta}u(\log y)^2. 
\end{equation}
\end{lem}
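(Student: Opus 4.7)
La cl\'e de la preuve est la factorisation
\begin{equation*}
Z_q(s,y) = Z(s,y)\, g_q(s) \qquad (P^+(q) \leqslant y),
\end{equation*}
r\'esultant d'une comparaison imm\'ediate des produits eul\'eriens dans \eqref{e12}. En prenant $j$ fois la d\'eriv\'ee logarithmique en $s=\beta$, on obtient l'identit\'e fondamentale
\begin{equation*}
\sigma_{j,q} = \sigma_j + (-1)^j \gamma_q^{(j)}(\beta).
\end{equation*}
Cela ram\`ene l'essentiel du travail \`a combiner \cite[lemme~2.5]{T15} --- qui traite le cas $q=1$ en fournissant les majorations $\sigma_j\ll u(\log y)^j$, le raffinement pour $j=3$, ainsi que l'\'equivalent \eqref{e38} --- avec un contr\^ole appropri\'e de $\gamma_q^{(j)}(\beta)$.

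Pour $j=1,2$, la majoration $|\gamma_q^{(j)}(\beta)|\ll u(\log y)^j$ r\'esulte imm\'ediatement du lemme~\ref{l32} et de la borne $D_q\ll\sqrt u/(\log 2u)^{1/2+\varepsilon}$ fournie par le lemme~\ref{Eq}. Pour $j\geqslant 3$, je r\'eit\'ererais l'argument de \cite[lemme~3.14]{RT}, d\'ej\`a invoqu\'e dans la preuve du lemme~\ref{l32}, pour obtenir $|\gamma_q^{(j)}(\beta)|\ll \Delta_q\vartheta_q^{j-1}(\log y)^j$. Comme $\Delta_q\ll\sqrt u$ par \eqref{mEq} et que $\vartheta_q\ll 1$ par \eqref{majDeltaq}, cette borne est $\ll \sqrt u(\log y)^j$, donc $\ll u(\log y)^j$ d\`es que $u\geqslant u_0$. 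Le raffinement pour $j=3$ se transporte de la m\^eme fa\c{c}on, le facteur $\min(1,\beta\log y)+1/\sqrt u$ provenant du comportement du noyau aux petits premiers, trait\'e conjointement dans les deux d\'eveloppements.

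L'identit\'e \eqref{sig2q} d\'ecoule alors de l'\'ecriture $\sigma_{2,q}-\sigma_2=\gamma_q''(\beta)$, combin\'ee aux bornes pr\'ec\'edentes et \`a l'estimation inf\'erieure $\sigma_2\gg u(\log y)^2$, qui r\'esulte de \cite[lemme~2.5]{T15} ou, sous les hypoth\`eses de \eqref{e38}, de cette relation directement. Plus pr\'ecis\'ement, \eqref{mEq} fournit $|\gamma_q''(\beta)|\ll u(\log y)^2/(\log 2u)^{1+2\varepsilon}$, quantit\'e qui est bien $O(\sigma_2/\log u)$.

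Le point le plus technique me para\^it \^etre l'\'etablissement de la formule explicite \eqref{e38}, pour $q=1$ et $\psi(y)\ll(\log x)^3$. Je proc\'ederais comme dans \cite{T15}: partant de l'expression de $\sigma_2$ comme somme sur les nombres premiers, une sommation d'Abel appuy\'ee sur l'estimation $\sum_{p\leqslant t}\log p = t+O(t/\log t)$, suivie du changement de variable $v=t^\beta$, ram\`ene chaque contribution \`a une int\'egrale que l'on calcule explicitement. Le facteur $(1+\eta)/(2+\eta)$ appara\^it en comparant les deux contributions via \eqref{y(1-beta)}--\eqref{y} et l'identit\'e $\psi(y)=(2+\eta)\log x$, tandis que le terme d'erreur $O(1/\log y)$ agr\`ege les restes des estimations premi\`eres et l'incertitude sur $\beta$ issue de \eqref{beta}.
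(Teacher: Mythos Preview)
Your treatment of \eqref{sig2q} and \eqref{e38} matches the paper's: both invoke $\sigma_2-\sigma_{2,q}=-\gamma_q''(\beta)\ll C_q(\log y)^2$ from Lemma~\ref{l32}, bound $C_q$ via the case analysis underlying Lemma~\ref{Eq}, and refer the explicit formula \eqref{e38} back to \cite[(2.21)]{T15}.

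For \eqref{e37}, however, the paper takes a much shorter path than your decomposition. It simply observes that $\sigma_{j,q}$ is obtained from $\sigma_j$ by omitting the contributions of the primes $p\mid q$, whence $\sigma_{j,q}\leqslant\sigma_j$; the bound then follows at once from \cite[lemme~2.5]{T15} (using \eqref{y} in the range $y>(\log x)^2$). In particular, no estimate of $\gamma_q^{(j)}(\beta)$ for $j\geqslant 3$ is ever required, nor is any extension of \cite[lemme~3.14]{RT}.

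Your route through $\sigma_{j,q}=\sigma_j+(-1)^j\gamma_q^{(j)}(\beta)$ also has a genuine gap for $j\geqslant 3$. The assertion ``$\Delta_q\ll\sqrt u$ par \eqref{mEq}'' is incorrect: \eqref{mEq} bounds $D_q=\min(\omega(q),\Delta_q)$, not $\Delta_q$ itself. By Remark~\ref{remsDqEq}(a), in the regime $0<\eta\leqslant 1$ one has $\Delta_q\asymp\{1+\omega(q)\}/\eta$, which blows up as $\eta\to 0$; so even granting $|\gamma_q^{(j)}(\beta)|\ll\Delta_q\vartheta_q^{j-1}(\log y)^j$, this does not give $\ll u(\log y)^j$ there. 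The paper's proof of Lemma~\ref{l32} handles the small-$\eta$ case for $j=1,2$ by comparison with $\gamma_q^{(j)}(0)$ and the monotonicity of $\gamma_q'$, $-\gamma_q''$; that monotonicity is only stated for $j\leqslant 2$ and does not obviously persist for higher derivatives, so your sketch would need a separate argument in this regime.
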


\begin{proof}
Lorsque $ y \ll (\log x)^3$, toutes les majorations découlent directement  de \cite[lemme 2.5]{T15} et des inégalités $\sigma_{j,q}\leqslant \sigma_j$ $(j \geqslant 1)$. Dans le domaine $y >(\log x)^2 $, la même approche fonctionne, {\it mutatis mutandis}, en utilisant \eqref{y}. L'égalité \eqref{e38} coïncide avec  \cite[formule (2.21)]{T15}. \par
 Il reste à établir \eqref{sig2q}. Par \eqref{e32}, nous avons
\begin{equation*}
\sigma_2-\sigma_{2,q}=-\gamma_q''(\beta)\ll  C_q(\log y)^2. 
\end{equation*}
Comme la majoration \eqref{mEq} implique $\omega(q) \ll  \sqrt{u} $ lorsque $\psi(y) \ll \log x$ et $$\Delta_q^2\ll u/\log u$$ dans la circonstance complémentaire, nous obtenons bien l'évaluation annoncée.
\end{proof}

Rappelons la définition \eqref{e112} de $Y_{\varepsilon}$ $(\varepsilon>0)$.
\par 
La démonstration du lemme suivant étend à $Z_q(s,y)$ celle de \cite[lemme 2.6]{T15}.
\par \smallskip
\begin{lem}\label{l34}
Il existe une constante  $c_3>0$ telle que, pour tout $\varepsilon>0$, et sous  les conditions \eqref{e161}, \eqref{e32a}, nous ayons
\begin{equation}\label{e39}
  \bigg |   \frac{Z_q(\beta + i \tau,y)}{Z_q(\beta,y)}  \bigg | \leqslant   \begin{cases} \displaystyle
\ee^{-c_3u(\tau \log y)^4} & (|\tau|\leqslant {1}/{\log y}) ,\\ \noalign{\vskip1mm}
\displaystyle \ee^{-c_3u  \tau^4/ (1+\tau^4)} & ( {1}/{\log y} < |\tau|  \leqslant Y_{\varepsilon} ).
\end{cases}
\end{equation}
\end{lem}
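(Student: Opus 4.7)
Le plan est d'adapter la d�monstration du cas $q=1$ �tablie dans \cite[lemme 2.6]{T15}, en v�rifiant que l'exclusion des facteurs premiers de $q$ n'alt�re pas substantiellement les estimations. En posant $G_p(s):=(1-p^{-(\nu_p+1)s})/(1-p^{-s})$ pour le facteur local de $Z_q$ et en appliquant l'in�galit� triangulaire au polyn�me de Dirichlet $G_p$, nous �crivons
\begin{equation*}
-\log\left|\frac{Z_q(\beta+i\tau,y)}{Z_q(\beta,y)}\right| = \sum_{\substack{p\leqslant y\\p\,\nmid\,q}}\Psi_p(\tau),\qquad \Psi_p(\tau):=\log\frac{G_p(\beta)}{|G_p(\beta+i\tau)|}\geqslant 0.
\end{equation*}

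Dans le premier r�gime $|\tau|\leqslant 1/\log y$, le d�veloppement de Taylor � l'ordre 2 autour de $\tau=0$ fournit
\begin{equation*}
-\log\bigl|Z_q(\beta+i\tau,y)/Z_q(\beta,y)\bigr|=\tfrac12\sigma_{2,q}\tau^2+O\bigl(\sigma_{3,q}|\tau|^3\bigr).
\end{equation*}
Les relations \eqref{sig2q} et \eqref{e38} livrent $\sigma_{2,q}\asymp u(\log y)^2$, tandis que \eqref{e37} donne $\sigma_{3,q}\ll u(\log y)^3$. Pour $|\tau|\log y$ suffisamment petit, le terme quadratique domine et l'on obtient $-\log|Z_q(\beta+i\tau,y)/Z_q(\beta,y)|\gg u(\tau\log y)^2\geqslant u(\tau\log y)^4$. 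Pour la portion restante du r�gime, o� $|\tau|\asymp 1/\log y$, on raccorde � l'approche du second r�gime ci-dessous.

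Dans le second r�gime $1/\log y<|\tau|\leqslant Y_\varepsilon$, on suit le sch�ma de \cite[lemme 2.6]{T15} pour le cas $q=1$. L'argument y proc�de par d�coupage des nombres premiers $p\leqslant y$ en plages dyadiques, et combine estimations mertensiennes et r�gion sans z�ro de Korobov--Vinogradov pour $\zeta(s)$, afin de contr�ler l'oscillation de $e^{i\tau\log p}$. La restriction $p\nmid q$ retranche de chaque somme dyadique au plus $\omega(q)$ termes; l'hypoth�se \eqref{e161}, qui implique $\omega(q)=o(\pi(y)/\log y)$, garantit que la plage dominante --- comportant $\gg\pi(y)/\log y$ nombres premiers --- n'est amput�e que d'une fraction n�gligeable. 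Le minorant $\sum_{p\leqslant y,\,p\nmid q}\Psi_p(\tau)\gg u\tau^4/(1+\tau^4)$ subsiste donc, � constante multiplicative pr�s.

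L'obstacle principal r�side dans la v�rification pr�cise, � chaque �tape du sch�ma de \cite{T15}, que la contribution des primes divisant $q$ demeure n�gligeable, notamment pour $|\tau|$ proche de $Y_\varepsilon$, o� la marge fournie par la r�gion sans z�ro de $\zeta(s)$ est la plus �troite.
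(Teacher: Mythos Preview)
Your first-regime argument has a genuine gap. The Taylor expansion $-\log|Z_q(\beta+i\tau,y)/Z_q(\beta,y)|=\tfrac12\sigma_{2,q}\tau^2+O(\sigma_{3,q}|\tau|^3)$ only yields the desired minorant when the error term is dominated, i.e.\ for $|\tau|\leqslant c_0/\log y$ with $c_0$ small enough (since $\sigma_{2,q}\tau^2\asymp u(\tau\log y)^2$ and $\sigma_{3,q}|\tau|^3\ll u(\tau\log y)^2\cdot|\tau\log y|$). For the remaining band $c_0/\log y<|\tau|\leqslant 1/\log y$ you propose to ``raccorder'' to the second-regime approach, but that approach only produces $-\log|\cdot|\gg u\tau^4/(1+\tau^4)$, which at $|\tau|\asymp 1/\log y$ is merely $\gg u/(\log y)^4$, whereas the lemma demands $\gg u(\tau\log y)^4\asymp u$. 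The two bounds differ by a factor $(\log y)^4$, so the raccordement fails.

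The paper avoids this entirely by never invoking a Taylor expansion. Instead it relies on the pointwise inequality (established in \cite{T15})
\[
\Big|\frac{Z_q(\beta+i\tau,y)}{Z_q(\beta,y)}\Big|\leqslant \ee^{-\kappa W_q},\qquad W_q:=\sum_{\substack{p\leqslant y\\p\,\nmid\,q}}\frac{\|(\tau/2\pi)\log p\|^4}{p^\beta},
\]
which is intrinsically quartic. For $|\tau|\leqslant 1/\log y$ one has $\|(\tau/2\pi)\log p\|=|\tau|\log p/(2\pi)$, and the minorant $W_1\gg u(\tau\log y)^4$ from \cite[(2.27)]{T15} is combined with the bound $\sum_{p\mid q}\|(\tau/2\pi)\log p\|^4/p^\beta\ll u^{3/4}(\tau\log y)^4$ (using $z_q\leqslant y^{3/4}$, which follows from \eqref{e161}) to conclude $W_q\gg u(\tau\log y)^4$ uniformly on the whole first regime. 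Your second-regime sketch is essentially on the right track --- the key observation that $\omega(q)=o(z/\log z)$ for $z>y^{3/4}$ keeps the dyadic counts intact --- but note that the paper splits the range further at $|\tau|=\sqrt{y}$, using the oscillation lemma from \cite{OT13} below and the Korobov--Vinogradov input only above, and it is this precise organisation that you would still need to carry out.
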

\begin{proof}
Posons $s=\beta+i\tau$ $(\tau \in \mathbb{R})$ et notons $\Vert z \Vert $ la distance du nombre réel $z$ à l'ensemble des entiers. Les calculs des pages 342-343 de \cite{T15} fournissent, pour une constante convenable $\kappa>0$,
$$  \left | \frac{Z_q(s,y)}{Z_q(\beta,y)} \right | \leqslant  \ee^{- \kappa W_q} $$
avec
$$   W_q:= \sum_{\substack{p \leqslant y \\  p \, \nmid \,  q }}\frac{\Vert {(\tau/ 2 \pi)\log p}  \Vert ^4 }{p^{\beta}}\cdot   $$
Lorsque $|\tau | \leqslant 1/\log y $, et puisque \eqref{e161} implique $z_q \leqslant y^{3/4}$, nous avons, grâce à~\eqref{y}, 
\begin{equation}
\begin{aligned}\label{W1}
\sum_{ p \, \mid \,  q }\frac{\Vert {(\tau/ 2 \pi)\log p}  \Vert ^4 }{p^{\beta}} &\ll \tau^4 (\log z_q)^3\frac{z_q^{1-\beta}-1}{1-\beta} \\&\ll  \tau^4 (\log y)^3  \frac{y^{3(1-\beta)/4}-1}{1-\beta} \ll   \frac{ (\tau \log y)^4 u^{3/4}}{(\log 2u)^{1/4}}  \cdot 
\end{aligned}
\end{equation}
Comme la minoration (2.27) de \cite{T15} implique  
\begin{equation}
\label{W2}
W_1  \gg   \frac{\tau^4(\log y)^3( y^{1-\beta}-1 )}{1-\beta}\gg u (\tau \log y)^4 ,
\end{equation}
et comme $u$ peut être choisi assez grand en vertu de \eqref{e32a}, il suit  $W_q\gg u(\tau \log y)^4,$ ce qui implique bien \eqref{e39} dans ce cas.
\par
Lorsque $1/\log y \leqslant |\tau | \leqslant \sqrt{y} $, l'argument employé dans la démonstration de \cite[lemme~5.12]{OT13} fournit
\begin{equation}\label{e310}
\sum_{\substack{z/2<p \leqslant z \\ p \, \nmid \,  q   }} \Vert   (\tau /2 \pi )\log p  \Vert^4 \gg \frac{\tau^4}{1+\tau^4}\sum_{\substack{z/2 < p \leqslant z  \\  p \, \nmid  \, q }}1   \qquad  ( y^{3/4} < z \leqslant y   )    .
\end{equation}
Or, l'hypothèse \eqref{e114} implique $\omega(q) =o\big(z / \log z  \big)$. Il suit  
\begin{equation}\label{e311}
   \sum_{\substack{z/2 < p \leqslant z  \\  p \, \nmid  \,  q }}1  \gg  \frac{z}{\log z},      
\end{equation}
d'où $W_q \gg \tau^4 u/(1+\tau^4) $ par sommation d'Abel, compte tenu de \eqref{y(1-beta)} --- cf. \cite{T15}.\par

Lorsque $\sqrt{y}< |\tau| \leqslant Y_{\varepsilon} $, le terme de gauche de \eqref{e310} est 
$$  \gg  \sum_{\substack{z/2 < p \leqslant z  \\  p \, \nmid \,  q }} \sin^4  \big( \tfrac12 \tau \log p  \big)=  \tfrac38 \sum_{\substack{z/2 < p \leqslant z  \\  p \, \nmid  \, q }} 1  -\tfrac12 \sum_{\substack{z/2 < p \leqslant z  \\  p\,  \nmid \, q }}  \big\{ \cos(\tau \log p)-\tfrac14 \cos  (2\tau \log p)  \big \}.      $$\par 
 Comme $\omega(q)=o\big(z/\log z  \big)$, l'argument de \cite[lemme 2.6]{T15} (relatif au cas $q=1 $) permet de montrer que  la dernière somme est $o\big(z/\log z  \big)$.
 Compte tenu de \eqref{e311}, une sommation d'Abel permet de conclure.
\end{proof}

\begin{lem}\label{l35}
 Il existe une constante $c_4>0$ telle que, pour tout $\varepsilon>0$ et sous les conditions \eqref{e161}, \eqref{e32a} et $1 \leqslant z \leqslant \min  \{ Y_{\varepsilon},\ee^{ c_4 u }  \}$, nous ayons
\begin{equation}\label{e312}
\Upsilon_q(x+x/z,y)-\Upsilon_q(x,y) \ll  x^{\beta} Z_q(\beta,y)/z.
\end{equation}
\end{lem}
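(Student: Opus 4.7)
The plan is to apply the Perron formula on the saddle-point vertical line $\Re s = \beta$ and estimate the resulting contour integral using the decay bounds for $Z_q(\beta+i\tau,y)$ furnished by Lemma~\ref{l34}. This is the natural short-interval counterpart to the saddle-point estimate \eqref{thm1}.

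The first step is to write, via a smoothed or truncated Perron formula,
$$\Upsilon_q(x+x/z,y) - \Upsilon_q(x,y) = \frac{1}{2\pi i} \int_{\beta - iT}^{\beta + iT} Z_q(s,y) \,\frac{(x+x/z)^s - x^s}{s}\, ds + E_T,$$
with $T$ of the same order of magnitude as $Y_\varepsilon$. The truncation error $E_T$ is handled by a standard effective Perron argument or, more transparently, by first convolving the indicator of $(x, x+x/z]$ with a smooth bump whose Mellin transform decays faster than any polynomial. The integrand kernel satisfies the elementary bound
$$\left|\frac{(x+x/z)^s - x^s}{s}\right| \ll \min\!\left(\frac{x^\beta}{z},\ \frac{x^\beta}{|\tau|}\right) \qquad (s = \beta + i\tau),$$
obtained from the identity $(x+x/z)^s - x^s = s\int_x^{x+x/z} t^{s-1}\,dt$ and the mean-value inequality.

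The second step is to partition the range $|\tau| \leqslant Y_\varepsilon$ according to the three regimes of Lemma~\ref{l34}. On $|\tau| \leqslant 1/\log y$, I would apply the kernel bound $x^\beta/z$ together with the Gaussian estimate $\ee^{-c_3 u(\tau\log y)^4}$; integration yields a contribution $\ll x^\beta Z_q(\beta,y)/(z u^{1/4} \log y)$, well within budget. On the intermediate range $1/\log y \leqslant |\tau| \leqslant 1$, I would subdivide further at $|\tau| = z$ (when $z \leqslant 1$), using $x^\beta/z$ on the short-$\tau$ side and $x^\beta/|\tau|$ on the long-$\tau$ side, and exploit the decay $\ee^{-c_3 u \tau^4/(1+\tau^4)}$; the saddle-type substitution $t = u^{1/4}\tau$ then shows that each piece is absorbed into $x^\beta Z_q(\beta,y)/z$. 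On the outer range $1 \leqslant |\tau| \leqslant Y_\varepsilon$, the decay factor from Lemma~\ref{l34} exceeds $\ee^{-c_3 u/2}$, so the total contribution is $\ll x^\beta Z_q(\beta,y)\, Y_\varepsilon\, \ee^{-c_3 u/2}$, which is absorbed into $x^\beta Z_q(\beta,y)/z$ provided $c_4$ is chosen sufficiently small relative to $c_3$: then $z \leqslant \ee^{c_4 u}$ forces $z\, Y_\varepsilon\, \ee^{-c_3 u/2} \ll 1$.

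The main technical obstacle will be the intermediate range $1/\log y \leqslant |\tau| \leqslant 1$: here the $1/|\tau|$ part of the kernel may generate a logarithmic loss, and it is necessary to split at the threshold $|\tau| = z$ so that both pieces remain under control after the substitution $t = u^{1/4}\tau$. A secondary concern is the justification of the truncation error $E_T$, which is most cleanly handled by the smooth-bump reduction mentioned above, converting the problem to an integral that converges on the full line by virtue of the rapid decay of the test function's Mellin transform. The compatibility of the two constraints $z \leqslant Y_\varepsilon$ and $z \leqslant \ee^{c_4 u}$ is automatic since $c_4$ emerges as an absolute positive constant from the argument.
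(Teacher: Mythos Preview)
Your overall approach---Perron on the line $\Re s = \beta$ combined with the decay bounds of Lemma~\ref{l34}---coincides with that of \cite[lemme~2.7]{T15}, to which the paper simply refers. However, your treatment of the outer range $1 \leqslant |\tau| \leqslant Y_\varepsilon$ contains a real gap. You assert that choosing $c_4$ small relative to $c_3$ yields $z\, Y_\varepsilon\, \ee^{-c_3 u/2} \ll 1$, but the hypotheses \eqref{e161}, \eqref{e32a} impose no upper bound on $(\log y)^{3/2-\varepsilon}$ in terms of $u$: with $u=u_0$ fixed and $y=x^{1/u_0}$, the quantity $Y_\varepsilon = \exp\!\big((\log x/u_0)^{3/2-\varepsilon}\big)$ is unbounded while $\ee^{c_3 u/2}$ stays constant. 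Neither $z \leqslant \ee^{c_4 u}$ nor $z \leqslant Y_\varepsilon$ rescues the claim, and replacing $Y_\varepsilon$ by $\log Y_\varepsilon$ via the sharper kernel bound $x^\beta/|\tau|$ does not help either.

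The remedy is the smooth-bump device you already invoke for $E_T$, but it must be used consistently rather than only at the truncation step. A $C^N$ majorant of $\mathbf{1}_{(x,\,x+x/z]}$ supported on an interval of length $\asymp x/z$ has Mellin transform $\ll_N x^\beta z^{-1}(1+|\tau|/z)^{-N}$. On $|\tau|\geqslant 1$ this extra decay converts the problematic length factor into $\int_1^\infty (1+\tau/z)^{-N}\,\dd\tau \ll z$, so the contribution becomes $\ll x^\beta Z_q(\beta,y)\,\ee^{-c_3 u/2}$, which is now genuinely $\ll x^\beta Z_q(\beta,y)/z$ since $z \leqslant \ee^{c_4 u}$; the tail $|\tau| > Y_{\varepsilon/2}$ (where only the trivial bound $|Z_q|\leqslant Z_q(\beta,y)$ is available) is then controlled by the kernel decay together with $z \leqslant Y_\varepsilon$. (Incidentally, your remark about subdividing at $|\tau|=z$ ``when $z\leqslant 1$'' is vacuous, since $z\geqslant 1$ throughout.)
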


\begin{proof}
La preuve est identique à celle de \cite[lemme $2.7$]{T15} en remplaçant $Z$ par $Z_q$, et en faisant appel au Lemme \ref{l34}. 
\end{proof}

\subsection{Estimations relatives aux séries $Z(s,\chi;y)$}

Pour tout caractère de Dirichlet $\chi$ non principal modulo $q$,  posons
\begin{equation*}
Z(s,\chi;y) := \prod_{p \leqslant y} \frac{1-\chi(p)^{\nu_p+1}p^{-(\nu_p+1)s}}{1-\chi(p)p^{-s}}  \qquad  ({\Re}e(s) >0),
\end{equation*}
et, sous la condition supplémentaire $|\tau | \leqslant Y_{\varepsilon} $,
définissons également
\begin{equation}\label{e313}
 \rW_q  (y,\tau;\chi):= \sum_{\substack{p \leqslant y \\ p \, \nmid  \, q }} \frac{\{1- {\Re}e (\chi(p)/p^{i \tau})\}^2}{p^{\beta}},
\end{equation}
et  convenons d'omettre l'indice $q$ lorsque $q=1$.\par 
Nous nous proposons ici d'établir les propositions auxiliaires nécessaires aux démonstrations des Théorèmes \ref{T3} et \ref{T4}.  \par

\par 

La preuve de l'énoncé suivant est analogue à celle de \cite[lemme 2.6]{T15}.

\begin{lem}\label{l36}
Pour un choix convenable des constantes $c_0>0$, $c_5>0$,  nous avons
\begin{equation}
\label{majZs}
\bigg |    \frac{Z(\beta +i \tau,\chi;y)}{Z_q(\beta,y)}     \bigg | \leqslant  \ee^{-c_5 \rW_q(y,\tau;\chi)}  \qquad  \Big( \tau\in\R, \ q \leqslant y^{c_0/\log_2 y}  \Big).
\end{equation}

\end{lem}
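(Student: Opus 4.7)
The plan is to adapt the proof strategy of \cite[lemme~2.6]{T15}, which already underlies Lemma~\ref{l34} above. The key observation is that, for $p\nmid q$, writing $\chi(p)p^{-i\tau}=\ee^{i\phi_p}$ with $\phi_p:=\arg\chi(p)-\tau\log p$, one has $\chi(p)^{\nu_p+1}p^{-i(\nu_p+1)\tau}=\ee^{i(\nu_p+1)\phi_p}$; moreover primes $p\mid q$ contribute trivial factors in both $Z(\beta+i\tau,\chi;y)$ and $Z_q(\beta,y)$, since $\chi(p)=0$. Consequently, the ratio admits the same logarithmic decomposition as in the trivial-character case, but with every occurrence of the phase $\tau\log p$ replaced by $\phi_p$.

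More precisely, applying the identity $|1-r\ee^{i\theta}|^2=(1-r)^2+2r(1-\cos\theta)$ to each local factor yields
\begin{equation*}
2\log\bigg|\frac{Z(\beta+i\tau,\chi;y)}{Z_q(\beta,y)}\bigg| = \sum_{\substack{p\leq y\\ p\,\nmid\, q}}\bigg[\log\bigg(1+\frac{2p^{-(\nu_p+1)\beta}(1-\cos((\nu_p+1)\phi_p))}{(1-p^{-(\nu_p+1)\beta})^2}\bigg)-\log\bigg(1+\frac{2p^{-\beta}(1-\cos\phi_p)}{(1-p^{-\beta})^2}\bigg)\bigg].
\end{equation*}
I would first bound the positive log-sum (the $(\nu_p+1)$-part) using $\log(1+t)\leq t$ together with $p^{\nu_p+1}>y$, obtaining a total contribution $\ll y^{1-\beta}/\log y\ll u\log 2u/\log y$ via~\eqref{y}. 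Next, I would bound the second log-sum from below by $c\,\rW_q(y,\tau;\chi)$ term by term: setting $x_p:=p^{-\beta}(1-\cos\phi_p)/(1-p^{-\beta})^2\geq 0$, if $2x_p\leq 1$ I use $\log(1+2x_p)\geq x_p$ together with $x_p\geq\tfrac12 p^{-\beta}(1-\cos\phi_p)^2$ (valid since $1-\cos\phi_p\leq 2$), while if $2x_p>1$ I use $\log(1+2x_p)\geq\log 2\geq \tfrac14 p^{-\beta}(1-\cos\phi_p)^2$ (since the right-hand side does not exceed~$1$).

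The hypothesis $q\leq y^{c_0/\log_2 y}$ enters exactly as in the proof of Lemma~\ref{l34}: with $c_0$ suitably small, one has $\omega(q)=o(z/\log z)$ uniformly for $z$ in every dyadic range within $(y^{3/4},y]$, so the lower-bound estimates on unrestricted prime sums transfer without loss to sums restricted to $p\nmid q$. The main obstacle will be to ensure that the subordinate positive contribution $O(u\log 2u/\log y)$ coming from the $(\nu_p+1)$-factors is absorbed into the dominant negative contribution controlling $\rW_q$: this is handled by retaining the full strength of the small-prime terms in the second sum, whose collective contribution is driven by $\sum_{p\nmid q}p^{-\beta}$ and dominates the error under the stated hypotheses. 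Transposing the computations of pages~342--343 of~\cite{T15} with the substitution $\tau\log p\mapsto\phi_p$ then yields the announced bound, with $c_5$ proportional to the absolute constant~$\kappa$ of the reference.
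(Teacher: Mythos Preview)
Your decomposition into a ``positive'' sum (from the $(\nu_p+1)$-numerators) and a ``negative'' sum (from the denominators), bounded separately, cannot yield the stated inequality. The point of the lemma is that \eqref{majZs} holds \emph{uniformly} in $\tau$ and $\chi$, in particular when $\rW_q(y,\tau;\chi)$ is very small --- which happens whenever the phases $\phi_p$ are all close to $0$. In that regime your lower bound $B\geqslant c\,\rW_q$ on the negative sum is tiny, while your upper bound $A\ll u\log 2u/\log y$ on the positive sum is a fixed quantity independent of the $\phi_p$: there is nothing forcing $A\leqslant (c-c_5)\rW_q$, so the absorption step simply fails. (Note that when all $\phi_p=0$ the desired bound reads $|Z/Z_q|\leqslant 1$; your scheme does not even recover this.) Your remark that the negative sum is ``driven by $\sum_{p\nmid q}p^{-\beta}$'' is not correct: every term carries the factor $(1-\cos\phi_p)$ or $(1-\cos\phi_p)^2$, which may vanish.

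The paper --- and the computation on pp.~342--343 of \cite{T15} that you cite --- does \emph{not} separate the two sums. It bounds each Euler factor of $|Z(s,\chi;y)/Z_q(\beta,y)|^2$ as a single rational expression, showing (with $\tau_p:=\Vert\phi_p/2\pi\Vert$, $B_p:=p^\beta(1-p^{-\beta})^2$) that the $p$-th factor is $\leqslant 1-\dfrac{8\tau_p^2\sin^2(\pi\tau_p)}{3B_p+12\sin^2(\pi\tau_p)}$. This joint bound is what simultaneously gives the trivial estimate $\leqslant 1$ and the saving $\gg (1-\cos\phi_p)^2/p^\beta$; summing over $p$ then yields $|Z/Z_q|\leqslant\ee^{-\kappa \rW_q}$ directly, with no absorption needed. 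Incidentally, the hypothesis $q\leqslant y^{c_0/\log_2 y}$ plays no role in this argument --- the bound is purely local and valid for all $q$.
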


\begin{proof}
Soit $s:= \beta +i\tau$.  Un calcul de routine fournit
\begin{equation} \label{e314}
\bigg |    \frac{Z(s,\chi;y)}{Z_q(\beta,y)}     \bigg |^2 = \prod_{\substack{p \leqslant y \\ p \, \nmid \, q}}\frac{1+4\sin^2 \big( \tfrac12 \alpha_p(\nu_p+1)\big) / \{  p^{\beta(\nu_p+1)}(1-p^{-\beta(\nu_p+1)})^2 \} }{1+4\sin^2(\tfrac12 \alpha_p)/\{ p^{\beta}(1-p^{-\beta})^2 \}}
\end{equation}
où $\alpha_p\in ]-\pi,\pi]$ est l'argument du nombre complexe $\chi(p)/ p^{i\tau}$.
 \par 
Posons $\tau_p:=\Vert \alpha_p/2\pi \Vert$ et $B_p:= p^{\beta}(1-p^{-\beta})^2$. En raisonnant comme dans la preuve  de \cite[lemme 2.6]{T15}, on obtient que le terme général du produit de \eqref{e314} n'excède pas
$$   1-\frac{8 \tau_p^2 \sin^2( \pi \tau_p)}{3B_p+ 12 \sin^2(\pi \tau_p)} \cdot  $$
L'inégalité $ \vert \sin \pi \tau_p \vert   \geqslant 2 \tau_p $ fournit donc, pour une constante convenable $\kappa_0>0$,
\begin{equation} \label{e315}
   \bigg |    \frac{Z(s,\chi;y)}{Z_q(\beta,y)}     \bigg |^2 \leqslant \ee^{-\kappa_0 \rV_q}   
   \end{equation}
où l'on a posé
$$  \rV_q:= \sum_{\substack{p\leqslant y \\ p \, \nmid  \, q }}\frac{\tau_p^4}{B_p+4 \tau_p^2}   \gg    \sum_{\substack{p\leqslant y \\ p \, \nmid  \, q }}\frac{\sin^4(\pi \tau_p)}{p^{\beta}}=\sum_{\substack{p\leqslant y \\ p \, \nmid  \, q }}\frac{(1-\cos (\alpha_p))^2}{4p^{\beta}} .  $$
Cette minoration implique bien le résultat annoncé. \end{proof} 

Posons à présent 
\begin{equation}\label{e317}
\rD(y,\tau;\chi):=  \sum_{p \leqslant y} \frac{\big\{1-{\Re}e(\chi(p)/p^{i\tau}) \big\} \log p}{p^{\beta}}  , \quad  S(y,\tau;\chi) := \sum_{n \leqslant y} \frac{\chi(n) \Lambda(n)}{n^{\beta+i\tau}}\cdot 
\end{equation}\par 
\begin{lem}\label{l37}
Sous la condition \eqref{e11}, nous avons 
\begin{equation}\label{e318}
\rD(y,\tau;\chi)=\frac{y^{1-\beta}}{1-\beta}-\Re e\big(S(y,\tau;\chi)\big)+O\bigg( \frac{y^{1-\beta}}{1-\beta} \bigg \{  \ee^{-\sqrt{\log y}} + \frac 1u   \bigg \}    \bigg).
\end{equation}
\end{lem}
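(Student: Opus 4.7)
Le plan est de faire appara\^\i tre la fonction de von Mangoldt $\Lambda$ en compl\'etant les sommes sur les nombres premiers par celles portant sur les puissances d'exposant $k\geqslant 2$, puis d'invoquer le th\'eor\`eme des nombres premiers sous forme de la Vall\'ee Poussin.

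Premi\`ere \'etape, nous \'ecrivons
\begin{equation*}
\rD(y,\tau;\chi)=\sum_{n\leqslant y}\frac{\Lambda(n)}{n^\beta}-\Re e\,S(y,\tau;\chi)+\Theta(y,\tau;\chi),
\end{equation*}
o\`u
\begin{equation*}
\Theta(y,\tau;\chi):=-\sum_{k\geqslant 2}\sum_{p^k\leqslant y}\frac{\{1-\Re e(\chi(p)^k/p^{ki\tau})\}\log p}{p^{k\beta}}
\end{equation*}
regroupe les contributions correctrices des puissances sup\'erieures dans les deux sommes sur les premiers.

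Deuxi\`eme \'etape, l'int\'egration par parties de l'int\'egrale de Stieltjes $\int_{1^-}^y t^{-\beta}\,\d\psi(t)$, coupl\'ee \`a l'estimation $\psi(t)=t+O\big(t\ee^{-c\sqrt{\log t}}\big)$ valable pour $t\geqslant 2$, fournit
\begin{equation*}
\sum_{n\leqslant y}\frac{\Lambda(n)}{n^\beta}=\frac{y^{1-\beta}}{1-\beta}+O\bigg(\frac{y^{1-\beta}\ee^{-c\sqrt{\log y}}+1}{1-\beta}\bigg)\cdot
\end{equation*}
La comparaison de $1/(1-\beta)$ avec $y^{1-\beta}/\{u(1-\beta)\}$, via \eqref{y} et \eqref{1-beta} sous la forme $y^{1-\beta}\asymp u\log 2u\gg u$, montre que la quantit\'e $1/(1-\beta)$ est absorb\'ee par le terme d'erreur annonc\'e.

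Il reste \`a contr\^oler $\Theta(y,\tau;\chi)$, trivialement born\'e par $2\sum_{k\geqslant 2}\sum_{p^k\leqslant y}\log p/p^{k\beta}$, lui-m\^eme domin\'e par sommation g\'eom\'etrique en $k$ par $O\big(\sum_{p\leqslant\sqrt{y}}\log p/p^{2\beta}\big)$. L'application du th\'eor\`eme des nombres premiers \`a cette derni\`ere somme, combin\'ee \`a une discussion selon la position de $\beta$ par rapport \`a $\tfrac12$ dans les deux r\'egimes de \eqref{beta} et \`a la contrainte $\psi(y)>2\log x$ (qui force en particulier $u\log y\ll\psi(y)\ll y$, donc $u\ll y/\log y$), permet de v\'erifier que cette borne s'ins\`ere bien dans $\{y^{1-\beta}/(1-\beta)\}\{\ee^{-\sqrt{\log y}}+1/u\}$. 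Cette v\'erification constitue l'obstacle technique principal, car selon que $\ee^{-\sqrt{\log y}}$ ou $1/u$ domine dans le terme d'erreur cible, c'est l'une ou l'autre des deux estimations qui doit absorber la contribution des carr\'es de nombres premiers.
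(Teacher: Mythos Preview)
Your approach is correct and essentially coincides with the paper's: evaluate $\sum_{p\leqslant y}(\log p)/p^\beta$ (or, equivalently, $\sum_{n\leqslant y}\Lambda(n)/n^\beta$) by the prime number theorem and Abel summation, then control the contribution of prime powers $p^\nu$ with $\nu\geqslant 2$. The paper carries this last step out explicitly via the single bound $\sum_{p^\nu\leqslant y,\,\nu\geqslant 2}(\log p)/p^{\nu\beta}\ll(1+y^{1/2-\beta})\log y$, which, combined with $(1-\beta)\log y\asymp\log 2u$ and $y^{1-\beta}\asymp u\log 2u$, gives the stated error without the case analysis around $\beta=\tfrac12$ that you anticipate.
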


\begin{proof}
Compte tenu de  \eqref{1-beta} et \eqref{y}, une forme forte du théorème des nombres premiers fournit par sommation d'Abel --- \cf\ \cite[Lemme 3.5]{RT} --- 
\begin{equation*}
\sum_{p\leqslant y}\frac{\log p}{p^{\beta}}=\frac{y^{1-\beta}}{1-\beta}\bigg \{ 1+ O  \bigg( \ee^{-\sqrt{\log y}} + \frac {1}{u\log 2u}  \bigg)   \bigg \} .
\end{equation*}
De plus, 
\begin{align*} 
\sum_{p \leqslant y}  \frac{\chi(p)\log p}{p^{\beta+i \tau}} -S(y,\tau;\chi) &\ll \sum_{\substack{p^{\nu} \leqslant y \\ \nu \geqslant 2 }}  \frac{\log p}{p^{\nu \beta}} \ll \big(1+y^{1/2-\beta}\big)\log y\\ & \ll\frac{y^{1-\beta}}{1-\beta}\Big(\frac1u+\frac{\log u}{\sqrt{y}\log y}\Big) .  
\end{align*}
La formule annoncée découle de ces estimations.
\end{proof}
 \par 

Le lemme suivant est consacré à l'étude de $S(y,\tau;\chi)$ par l'approche développée dans \cite[lemme III.5.16 ]{ten}. Nous notons $\chi_1$ l'éventuel caractère de Siegel de module $q>1$ et $\beta_1$ le zéro correspondant.  
Posons $$\eta(T):=1/\{(\log T)^{2/3}(\log_2T)^{1/3}\},\quad \eta_q(T):=\min(1/\log q,\eta(T)),$$ de sorte que le théorème de Vinogradov-Korobov (cf. par exemple, \cite[ch. 9, p.~176]{Mo94}),  garantit que, pour une constante positive convenable $c_6>0$, le domaine $$\{\sigma+i\tau:|\tau|\leqslant T,\sigma>1-c_6\eta_q(T)\}$$ est une région sans zéro de $L(s,\chi)$ lorsque $\chi\neq\chi_1$. Pour $c_7$ assez grande, $c_0>0$ assez petite, et $q\leqslant y^{c_0/\log_2y}$, il existe donc, dès que $y$ est assez grand, au plus un zéro de $\prod_{\chi\neq\chi_0}L(s,\chi)$ dans la demi-bande horizontale $\sigma>1-3c_7(\log_2y)/\log y$, $|\tau|\leqslant Y_\varepsilon$.  Nous posons
\begin{equation}
\label{e319}
\vartheta(\chi):=\begin{cases}
 1 & \mbox{ si } \chi=\chi_1 \mbox{ et } \beta_1>1-c_7(\log_2y)/\log y\\
0 & \mbox{ dans le cas contraire.}
\end{cases}
\end{equation}

\begin{lem}\label{l38}
 Soit $\varepsilon >0$ assez petit. Lorsque $q$ satisfait \eqref{e113} et sous la condition 
\begin{equation}
\label{e320} 
 \chi \neq \chi_0, \quad |\tau| \leqslant Y_{\varepsilon}, \quad x\geqslant 2,\quad  \psi(y)>2 \log x,\quad y\leqslant \ee^{(\log x)^\varepsilon}, \end{equation} 
nous avons
\begin{equation}
\label{321}
S(y,\tau; \chi)=-\vartheta(\chi) \frac{y^{\beta_1-\beta-i\tau}}{\beta_1-\beta-i\tau} +O \Big(  \frac{y^{1-\beta}}{(\log y)^3}  \Big).
\end{equation}
\end{lem}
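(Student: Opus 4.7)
The plan is to evaluate $S(y,\tau;\chi)$ via truncated Perron's formula and a contour shift into the Vinogradov-Korobov zero-free region recalled just before \eqref{e319}. Since $-L'(w,\chi)/L(w,\chi)=\sum_{n\geqslant 1}\chi(n)\Lambda(n)/n^w$ converges absolutely for $\Re e\,w>1$, setting $\kappa:=1-\beta+1/\log y$ and $T:=Y_\varepsilon$, the standard effective Perron formula yields
\begin{equation*}
S(y,\tau;\chi)=-\frac1{2\pi i}\int_{\kappa-iT}^{\kappa+iT}\frac{L'(s+\beta+i\tau,\chi)}{L(s+\beta+i\tau,\chi)}\frac{y^s}{s}\dd s+E_1,
\end{equation*}
where the truncation error $E_1$ is controlled via $\sum_n\Lambda(n)/n^{1+1/\log y}\ll\log y$ together with the customary bound on terms $n$ close to $y$, giving $E_1\ll y^{1-\beta}(\log y)^2/Y_\varepsilon$, which is absorbed in $O(y^{1-\beta}/(\log y)^3)$.

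I next shift the contour to the vertical line $\Re e\,s=\sigma_1:=1-\beta-2c_7(\log_2 y)/\log y$. Thanks to \eqref{1-beta}, the condition $y\leqslant \ee^{(\log x)^\varepsilon}$ in \eqref{e320} forces $\log u\gg\log_2 x\gg\log_2 y/\varepsilon$, so that $1-\beta\gg(\log u)/\log y$ exceeds $2c_7(\log_2 y)/\log y$ for $\varepsilon$ small enough; consequently $\sigma_1>0$ and the pole of $y^s/s$ at $s=0$ is \emph{not} crossed. The discussion leading to \eqref{e319} then shows that the sole singularity of the integrand inside the closed rectangle with vertices $\kappa\pm iT$, $\sigma_1\pm iT$ is the simple pole at $s=\beta_1-\beta-i\tau$ arising from the possible Siegel zero $\beta_1$ of $L(\cdot,\chi_1)$ when $\chi=\chi_1$. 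A direct residue computation produces the contribution $-y^{\beta_1-\beta-i\tau}/(\beta_1-\beta-i\tau)$, which is exactly the announced main term when $\vartheta(\chi)=1$. When $\chi=\chi_1$ but $\vartheta(\chi)=0$, we have $\beta_1\leqslant 1-c_7(\log_2 y)/\log y$, whence $y^{\beta_1-\beta}\leqslant y^{1-\beta}(\log y)^{-c_7}$, and since $|\beta_1-\beta-i\tau|\gg(\log 2u)/\log y$ by \eqref{1-beta}, this contribution is absorbable in $O(y^{1-\beta}/(\log y)^3)$ as soon as $c_7\geqslant 4$.

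It remains to bound the integrals on the shifted vertical line and on the two horizontal segments at $\Im m\,s=\pm T$. Standard estimates for $L'/L$ in the Vinogradov-Korobov zero-free region (see for instance \cite[ch. 9]{Mo94}) provide $|L'(w,\chi)/L(w,\chi)|\ll(\log qT)^2\ll(\log y)^{3-2\varepsilon}$ for $w$ in our domain and uniformly away from any Siegel zero (whose principal part has been subtracted when present). The shifted vertical integral is then bounded by
\begin{equation*}
y^{\sigma_1}(\log y)^{3-2\varepsilon}\!\int_{-T}^{T}\!\frac{\dd t}{|\sigma_1|+|t|}\ll y^{1-\beta}(\log y)^{9/2-3\varepsilon-2c_7},
\end{equation*}
which is $\ll y^{1-\beta}/(\log y)^3$ provided $c_7\geqslant 4$. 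The horizontal segments contribute $\ll y^\kappa(\log y)^{3-2\varepsilon}/T\ll y^{1-\beta}/Y_\varepsilon$, clearly negligible. Collecting these estimates yields \eqref{321}.

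The technical crux will be the calibration of the constants $c_0,c_7,\varepsilon$ together with the position of $\sigma_1$: one needs the Vinogradov-Korobov zero-free width $c_6\eta_q(T)$ to exceed $2c_7(\log_2 y)/\log y$ (forcing $c_0$ small enough that $\log q\ll\log y/\log_2 y$ does not spoil the $(\log qT)^2$ bound) and simultaneously $\sigma_1>0$ (forcing $\varepsilon$ small). The most delicate point is the careful treatment of the threshold regime for the exceptional zero, i.e.\ the transition between $\vartheta(\chi)=1$ and $\vartheta(\chi)=0$ when $\chi=\chi_1$: there the quantitative bound on $L'/L$ near the subtracted Siegel pole must be combined with the lower bound on $|\beta_1-\beta-i\tau|$ supplied by \eqref{1-beta}.
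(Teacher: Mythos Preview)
Your proof is correct and follows essentially the same route as the paper's: effective Perron formula, a contour shift into the Vinogradov--Korobov zero-free region, extraction of the Siegel residue, and the bound $|L'/L|\ll(\log qY_\varepsilon)^2$ on the shifted path. The only cosmetic difference is the depth of the shift: the paper moves to $\delta=1-\beta-c_7(\log_2 y)/\log y$, calibrated exactly to the threshold in the definition \eqref{e319} of $\vartheta(\chi)$, so that the residue is picked up precisely when $\vartheta(\chi)=1$ and your separate absorption of the case $\chi=\chi_1$, $\vartheta(\chi)=0$ becomes unnecessary.
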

\begin{proof}
 Nous pouvons supposer $x$, et donc $y$, assez grand. Notons $$w:=\beta+i\tau, \quad v := 1-\beta+1/\log y.$$ Il résulte de la formule de Perron effective (cf. \cite[cor. II.2.4]{ten}) que, sous la condition $| \tau | \leqslant Y_\varepsilon$, nous avons
\begin{equation}\label{e322}
S(y,\tau;\chi)  = \frac{-1}{2i\pi} \int_{v -iY_\varepsilon}^{v+i Y_\varepsilon}\frac{L'(s+w,\chi)y^s}{L(s+w,\chi)s}\mathrm{d}s + O \Big(  \frac{y^{1-\beta}(\log y)^3}{Y_\varepsilon}  \Big).
\end{equation}\par 
Pour $\varepsilon$ assez petit, l'hypothèse \eqref{e320}  implique
 $$1-\beta>3c_7(\log_2y)/\log y,$$ en vertu de \eqref{1-beta}. \par 
Déplaçons alors le segment d'intégration vers la gauche jusque $$\delta:=1-\beta-c_7(\log_2y)/\log y>2 c_7(\log_2y)/\log y.$$ Lorsque $\chi\neq\chi_1$, la zone traversée ne contient aucun pôle de l'intégrande, alors qu'elle en contient exactement un,  à savoir $s=\beta_1-w$ lorsque $\chi =\chi_1$. De plus, dans les deux cas, le segment translaté est  à distance $\geqslant c_7(\log_2y)/\log y $ d'un pôle de l'intégrande.\par 
 Le théorème des résidus implique alors que l'intégrale de \eqref{e322} vaut 
\begin{equation}\label{e323}
\frac{-1}{2i\pi} \int_{\gotW}\frac{L'}{L}(s+w,\chi)\frac{y^s}{s}\mathrm{d}s - \vartheta(\chi)\frac{y^{\beta_1-w}}{\beta_1-w}  
\end{equation}
où $\gotW$ désigne la ligne brisée $[v-iY_\varepsilon,\delta-iY_\varepsilon,\delta+iY_\varepsilon,v+iY_\varepsilon]$ parcourue dans le sens trigonométrique inverse.
\par 
Compte tenu du placement de $\gotW$ par rapport aux pôles de l'intégrande, on peut établir classiquement, via la formule explicite en fonction des zéros  (cf., par exemple, \cite[pp. 380-381]{ten}, que
\begin{equation*}
\left |  \frac{L'}{L}(s+w,\chi)    \right | \ll (\log qY_\varepsilon)^2\qquad (s\in\gotW).
\end{equation*}
\par 
L'intégrale de \eqref{e323} est donc
\begin{equation*}
\label{majinttrans}
\begin{aligned}
\ll(\log yY_\varepsilon)^3y^\delta+\frac{y^{1-\beta}(\log y)^3}{Y_\varepsilon}\ll\frac{y^{1-\beta}}{(\log y)^3},
\end{aligned}
\end{equation*}
quitte à augmenter la valeur de $c_7$.
Cela complète la démonstration.
\end{proof}

\begin{lem}
\label{l39}
Sous les conditions \eqref{e113} et \eqref{e320}, nous avons
\begin{equation}
\label{e326}
\rD(y,\tau;\chi) \gg \frac{y^{1-\beta}}{(1-\beta) \{1+\vartheta(\chi)(\log_2x)^2 \} } \cdot 
\end{equation}
\end{lem}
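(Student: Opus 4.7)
The plan is to derive an explicit asymptotic formula for $\rD(y,\tau;\chi)$ by combining the two previous lemmas, then handle the cases $\vartheta(\chi)=0$ and $\vartheta(\chi)=1$ separately. First I would substitute the evaluation of $S(y,\tau;\chi)$ from Lemma \ref{l38} into the expression from Lemma \ref{l37} to obtain
\begin{equation*}
\rD(y,\tau;\chi)=\frac{y^{1-\beta}}{1-\beta}+\vartheta(\chi)\,{\Re}e\Big(\frac{y^{\beta_1-\beta-i\tau}}{\beta_1-\beta-i\tau}\Big)+O\Big(\frac{y^{1-\beta}}{(1-\beta)\log y}\Big),
\end{equation*}
where the error absorbs both the contribution $O\bigl(y^{1-\beta}\{\ee^{-\sqrt{\log y}}+1/u\}/(1-\beta)\bigr)$ of Lemma \ref{l37} and the $O(y^{1-\beta}/(\log y)^3)$ of Lemma \ref{l38}, using $(1-\beta)^{-1}\asymp (\log y)/\log(2u)$ from \eqref{1-beta}. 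In the domain \eqref{ypetit} this error is negligible compared with the claimed lower bound.

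When $\vartheta(\chi)=0$ the Siegel contribution vanishes and the required estimate $\rD(y,\tau;\chi)\gg y^{1-\beta}/(1-\beta)$ is immediate. When $\vartheta(\chi)=1$, the defining condition $1-\beta_1<c_7\log_2 y/\log y$ together with $1-\beta\asymp\log(2u)/\log y$ and $\log_2y\leqslant\varepsilon\log_2x+O(1)$ yields $\beta_1-\beta\asymp 1-\beta$ provided $c_7\varepsilon$ is small enough. Bounding the Siegel term from below by $-y^{\beta_1-\beta}/\sqrt{(\beta_1-\beta)^2+\tau^2}\geqslant -y^{\beta_1-\beta}/(\beta_1-\beta)$ and using the factorisation $y^{1-\beta}-Cy^{\beta_1-\beta}=y^{\beta_1-\beta}(y^{1-\beta_1}-C)$ leads, for some absolute constant $C>0$, to
\begin{equation*}
\rD(y,\tau;\chi)\geqslant \frac{y^{\beta_1-\beta}\bigl(y^{1-\beta_1}-C\bigr)}{1-\beta}-O\Big(\frac{y^{1-\beta}}{(1-\beta)\log y}\Big).
\end{equation*}

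The crux of the argument, and what I expect to be the main obstacle, is to guarantee that the factor $y^{1-\beta_1}-C$ remains at least of order $1/(\log_2x)^2$, equivalently that $(1-\beta_1)\log y\gg 1/(\log_2x)^2$. Since the hypothesis $\vartheta(\chi)=1$ only provides an upper bound on $1-\beta_1$, the matching lower bound has to come from elsewhere: I would combine the Page--Landau effective lower bound $1-\beta_1\gg 1/\{\sqrt q(\log q)^2\}$ with the hypothesis $q\leqslant y^{c_0/\log_2y}$ from \eqref{e113}, and if necessary invoke Siegel's ineffective estimate $1-\beta_1\gg_\epsilon q^{-\epsilon}$ to push the quality of the bound. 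Feeding this into the inequality above yields the desired $\rD(y,\tau;\chi)\gg y^{1-\beta}/\{(1-\beta)(\log_2x)^2\}$, which together with the case $\vartheta(\chi)=0$ completes the proof.
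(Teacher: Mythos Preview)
Your approach diverges from the paper's at the crucial step when $\vartheta(\chi)=1$. By bounding the Siegel contribution crudely by $-y^{\beta_1-\beta}/(\beta_1-\beta)$ you discard both the oscillation and the $\tau$ in the denominator, and then try to recover the loss through a lower bound on $1-\beta_1$. This cannot succeed in the stated range of $q$. After your factorisation, the quantity to be bounded below is, by the mean-value theorem applied to $t\mapsto y^t/t$,
\[
\frac{y^{1-\beta}}{1-\beta}-\frac{y^{\beta_1-\beta}}{\beta_1-\beta}\asymp (1-\beta_1)(\log y)\,\frac{y^{1-\beta}}{1-\beta},
\]
so you need $(1-\beta_1)\log y\gg 1/(\log_2 x)^2$. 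But for $q$ as large as $y^{c_0/\log_2 y}$, Siegel's theorem only yields $1-\beta_1\gg_\epsilon q^{-\epsilon}\geqslant\exp\{-\epsilon c_0(\log y)/\log_2 y\}$, so $(1-\beta_1)\log y$ can be smaller than any negative power of $\log y$; since $\psi(y)>2\log x$ forces $\log_2 x<\log y$, we have $1/(\log_2 x)^2>1/(\log y)^2$, and the required inequality fails. Page--Landau is weaker still. No known zero-free region rescues this step.

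The paper never invokes a lower bound on $1-\beta_1$. Instead it splits on~$|\tau|$. For $|\tau|\leqslant 1/\log y$, a direct computation of the argument $\varphi:=\tau\log y-\arctan\{\tau/(\beta_1-\beta)\}$ shows $|\varphi|<\pi/2$, whence $\Re e\bigl(y^{\beta_1-\beta-i\tau}/(\beta_1-\beta-i\tau)\bigr)\geqslant 0$: the Siegel term is nonnegative and one obtains the full bound $\rD\gg y^{1-\beta}/(1-\beta)$. For $1/\log y<|\tau|\leqslant Y_\varepsilon$, the saving of $(\log_2 x)^{-2}$ comes from the $\tau$ in the denominator, not from $1-\beta_1$: the monotonicity of $t\mapsto y^t/|t+i\tau|$ together with $|\tau|\geqslant 1/\log y$ gives
\[
\Big|\frac{y^{\beta_1-\beta}}{\beta_1-\beta-i\tau}\Big|\leqslant\frac{y^{1-\beta}}{|1-\beta+i/\log y|}\leqslant\frac{y^{1-\beta}}{1-\beta}\Big\{1-\frac{c}{(\log_2 x)^2}\Big\},
\]
the last inequality because $1-\beta\asymp(\log_2 x)/\log y$. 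Subtracting this from the main term yields \eqref{e326}. The point you missed is that the factor $(\log_2 x)^{-2}$ is produced by the geometry of the complex denominator $\beta_1-\beta-i\tau$, not by the location of $\beta_1$.
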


\begin{proof}
Conservons les notations de la démonstration du Lemme \ref{l38}.
Par \eqref{e318} et \eqref{321}, nous pouvons écrire, sous les hypothèses effectuées,
\begin{equation*}
\rD(y,\tau;\chi) =\frac{y^{1-\beta}}{1-\beta}+\vartheta(\chi) {\Re}e \Big( \frac{y^{\beta_1-\beta-i\tau}}{\beta_1-\beta-i\tau}  \Big)  +O\bigg( \frac{y^{1-\beta}}{1-\beta} \bigg \{ \frac{1}{(\log y)^3} + \frac1{u}  \bigg \}    \bigg).
\end{equation*}
Cela implique immédiatement \eqref{e326} si $\vartheta(\chi)=0$. Dans le cas contraire, nous avons $\chi=\chi_1$, $\beta_1 > 1- c_7 \log _2 y/ \log y$ et $\beta_1-\beta\gg (\log_2x)/\log y $.
\par 
Si $|\tau | \leqslant 1/\log y$, posant $\varphi:=\tau\log y-\arctan\{\tau/(\beta_1-\beta)\}$ il suit
$$    {\Re}e \Big( \frac{y^{\beta_1-\beta-i\tau}}{\beta_1-\beta-i\tau}  \Big) =\frac{y^{\beta_1-\beta}\cos\varphi}{|\beta_1-\beta+i\tau|}\geqslant 0,    $$
d'où \eqref{e326}.\par 
Si $1/\log y < |\tau |\leqslant Y_{\varepsilon} $, nous avons, pour une constante convenable $c_3>0$, 
\begin{equation}
\label{Re}
\begin{aligned}
\Big |   {\Re}e  \Big(  \frac{y^{\beta_1-\beta-i\tau}}{\beta_1-\beta-i\tau}  \Big)  \Big |  &\leqslant   \Big |   \frac{y^{\beta_1-\beta}}{\beta_1-\beta-i\tau}  \Big |\leqslant \frac{y^{1-\beta}}{|1-\beta+i/\log y|}\\&\leqslant \frac{y^{1-\beta}}{1-\beta}\Big\{1-\frac{c_3}{(\log_2x)^2}\Big\}.
\end{aligned}
\end{equation}
\vskip-7mm\end{proof}

\section{Preuve du Théorème \ref{T2} } 
Plaçons-nous dans les hypothèses \eqref{e110} avec, disons, $\varepsilon \in ]0, \tfrac13[ $,  et  posons 
\begin{equation}\label{e51}
f_q(s):= \prod_{p \, \mid \, q} \big( 1-p^{-s}  \big)  \qquad (s\in \CC).
\end{equation}\par 

Lorsque $x$ est suffisamment grand, nous avons $\alpha > \tfrac12+\tfrac15\varepsilon$ d'après \eqref{alpha}. Il suit
\begin{equation}\label{e52}
0\leqslant \Psi_q(x,y)- \Upsilon_q(x,y)  \leqslant   \sum_{\substack{p \leqslant y \\ p\,  \nmid \, q }} \Psi_q \Big( \frac{x}{p^{\nu_p +1}},y \Big).
\end{equation}\par 

Sous la condition supplémentaire $y \leqslant x^{1/3}$, les théorèmes 2.4.(i) et 2.1 de \cite{RT} fournissent
\begin{equation}\label{e53}
 \Psi_q \Big( \frac{x}{p^{\nu_p +1}},y \Big)  \ll  \frac{f_q(\alpha)}{p^{(\nu_p+1)\alpha}}\Psi(x,y)\ll \frac{\Psi_q(x,y)}{p^{(\nu_p+1)\alpha}}\cdot 
\end{equation}\par 

En vertu des inégalités \eqref{e52} et \eqref{e53}, il suit
\begin{equation}\label{e54}
\Psi_q(x,y)- \Upsilon_q(x,y)  \leqslant  \Psi_q(x,y)S,
\end{equation} 
où l'on a posé
 \begin{equation}
 \label{e55}
S:=\sum_{\substack{p \leqslant y \\ p \, \nmid \, q }}\frac{1}{p^{(\nu_p+1)\alpha}}  \ll  \frac{y^{1/2-\alpha
}}{\log y} \ll \frac{u \log 2u}{\sqrt{y}\log y},
\end{equation}
d'après \cite[formule (31)]{T15}. Cela implique bien \eqref{e111} dans le cas considéré.  \par
\goodbreak
Lorsque $x^{1/3} \leqslant y \leqslant x$, nous avons $u\leqslant 3$ et $\Psi_q(x,y)\ll f_q(\alpha)\Psi(x,y)\asymp f_q(\alpha)x$ en vertu de \cite[th. 2.4(i)]{RT} et, par exemple, \cite{D}. De plus, on déduit aisément de \eqref{alpha} que $f_q(\alpha)\asymp f_q(1)=\varphi(q)/q$. La majoration triviale $\Psi_q \big(x/p^{\nu_p+1},y \big)\leqslant x/p^{\nu_p+1}$ fournit donc
\begin{equation*}\label{e57}
\begin{aligned}
\Psi_q(x,y) - \Upsilon_q(x,y) &\leqslant \sum_{p \leqslant y} \frac{x}{p^{\nu_p+1}}\ll \frac{\Psi(x,y)}{\sqrt{y}\log y}\ll\frac{\Psi_q(x,y)}{f_q(\alpha)\sqrt{y}\log y}\ll\frac{q\Psi_q(x,y)}{\varphi(q)\sqrt{y}\log y}\cdot
\end{aligned} 
\end{equation*}
Cela complète la preuve de \eqref{e111}.

\section{Preuve du Théorème \ref{T1}}

Le lemme suivant étend à $\Upsilon_q(x,y)$ la formule (3.4) de \cite{T15}. \par 

\begin{lem}\label{l40} Soit $c_4$ la constante apparaissant au Lemme \ref{l35}. Pour un choix convenable de $c_8>0$, et tout $\varepsilon>0$, nous avons, sous les hypothèses \eqref{e16} et \eqref{e161},
\begin{equation} \label{e41}
\Upsilon_q(x,y)= \frac{1}{2i\pi} \int_{\beta-iT}^{\beta+iT}\frac{Z_q(s,y) x^s}{s}\mathrm{d}s + O \Big( \frac{x^{\beta}Z_q(\beta,y)\log T}{T} \Big)
\end{equation}
avec $T:= \ee^{2c_8( \log u)^{4/3} } \leqslant \min (Y_{1/20},\ee^{c_4 u} )$.  
\end{lem}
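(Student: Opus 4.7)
The plan is to reproduce, \emph{mutatis mutandis}, the proof of formula~(3.4) in \cite{T15}, which establishes the case $q=1$. Since Lemme~\ref{l35} provides the $q$-analogue of the short-interval ingredient used in \cite{T15}, the adaptation is essentially mechanical, and I do not expect any substantial analytic difficulty.

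Concretely, I would first invoke the effective truncated Perron formula of \cite[cor.~II.2.4]{ten} applied to the Dirichlet series $Z_q(s,y)$ at abscissa $\sigma=\beta$, to obtain
$$\Upsilon_q(x,y)=\frac{1}{2i\pi}\int_{\beta-iT}^{\beta+iT}\frac{Z_q(s,y)x^s}{s}\dd s+R,$$
where $R$ is majorized by an expression of the shape $x^\beta\sum_n 1/\big(Tn^\beta|\log(x/n)|\big)$, the sum ranging over $y$-ultrafriable integers coprime to $q$, together with local correction terms for $n$ in a neighbourhood of $x$.

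Next I would split this sum according to the multiplicative distance of $n$ to $x$. The contribution from $n$ with $|\log(x/n)|\geqslant\log 2$ is trivially $\ll x^\beta Z_q(\beta,y)/T$. For $1\leqslant k\leqslant\lfloor\log_2 T\rfloor$, the dyadic shell $x/2^{k+1}\leqslant|n-x|<x/2^k$ contributes
$$\ll\frac{2^k}{T}\big\{\Upsilon_q\big(x(1+2^{-k}),y\big)-\Upsilon_q\big(x(1-2^{-k}),y\big)\big\},$$
and Lemme~\ref{l35}, applied with $z=2^k$, majorizes this by $\ll x^\beta Z_q(\beta,y)/T$. Summing over $k$ produces the total error $\ll x^\beta Z_q(\beta,y)\log T/T$; the innermost range $|n-x|<x/T$ is handled by a single further application of Lemme~\ref{l35} with $z=T$.

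The one point requiring verification is administrative rather than analytic: one must check that the dyadic parameter $2^k$ never falls outside the admissible range $z\leqslant\min(Y_{1/20},\ee^{c_4u})$ of Lemme~\ref{l35}. This reduces to arranging that $T=\ee^{2c_8(\log u)^{4/3}}$ itself lies below this threshold, which is immediate for $c_8$ small enough relative to $c_4$: under \eqref{e16} we have $\log y\gg \log_2 x$ and $u\to\infty$, so both constraints $(\log u)^{4/3}\ll(\log y)^{29/20}$ and $(\log u)^{4/3}\ll u$ are comfortably satisfied. This is the main, and only, obstacle I foresee.
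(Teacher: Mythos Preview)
Your proposal is correct and follows essentially the same route as the paper: an effective Perron formula at abscissa $\beta$, with the error near $x$ controlled via the short-interval estimate of Lemme~\ref{l35}. The only cosmetic difference is that the paper slices the neighbourhood of $x$ into intervals $[x\ee^{h/T},x\ee^{(h+1)/T}]$ for $|h|\leqslant T$ (applying Lemme~\ref{l35} each time with $z\asymp T$ and summing the harmonic weights $1/(1+|h|)$), whereas you use dyadic shells with $z=2^k$; both partitions yield the same $\log T/T$ saving.
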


\begin{proof}
Une version effective de la formule de Perron (cf. \cite[thm. II.2.3]{ten}) permet de montrer  que la différence entre le membre de gauche de \eqref{e41} et le terme principal du membre de droite est
\begin{equation*}
\begin{aligned}
&\ll \frac{1}{T} \sum_{\substack{n \geqslant 1\\ n\in U(y)}}  \frac{x^\beta}{n^{\beta}\{1+T|\log (x/n)|\}}\\&\ll \frac{x^\beta Z_q(\beta,y)}{T}+\sum_{ |h|\leqslant T}\frac{x^\beta}{1+|h|}\Big\{\Upsilon_q\big(x\ee^{(h+1)/T},y\big)-\Upsilon_q\big(x\ee^{h/T},y\big)\Big\}\\
&\ll \frac{x^\beta Z_q(\beta,y)}{T}\Big\{1+\sum_{|h|\leqslant T}\frac1{|h|+1}\Big\},
\end{aligned}
\end{equation*}
d'après le Lemme \ref{l35}. Cela implique bien \eqref{e41}.
\end{proof}

Pour $1\leqslant U\leqslant V$, désignons par $I_q(U,V)$ la contribution  du domaine $U<|\tau|\leqslant V$ à l'intégrale de \eqref{e41}. Nous posons $T_0:= (\log u)^{(1+\varepsilon)/2}/\{\sqrt{u}\log y\}$.

\begin{lem}\label{l41}  Pour un choix convenable de $c_9>0$ et tout $\varepsilon>0$, nous avons, sous les hypothèses \eqref{e16} et \eqref{e161},
\begin{equation}\label{e43}
I_q(T_0,T) \ll x^{\beta} Z_q(\beta,y)  \ee^{-c_9 (\log 2u)^{1+\varepsilon}}.
\end{equation}
\end{lem}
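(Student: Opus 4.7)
My plan is to split the integration range at $\tau_1 := 1/\log y$ and to handle each piece separately. For $\tau_1 < |\tau| \leq T$, the second estimate of Lemma \ref{l34} applies directly. For $T_0 \leq |\tau| \leq \tau_1$, the fourth-power bound from the same lemma is too weak near the lower endpoint: at $\tau = T_0$ its exponent is of order $u(T_0\log y)^4 = (\log u)^{2(1+\varepsilon)}/u$, which tends to $0$. I will therefore derive a Gaussian bound by Taylor expansion on this inner range.

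For the outer range $\tau_1 < |\tau| \leq T$, Lemma \ref{l34} together with $|\beta+i\tau|^{-1} \leq 1/|\tau|$ bounds the contribution by
$$\ll x^\beta Z_q(\beta, y)\int_{\tau_1}^T \ee^{-c_3 u\tau^4/(1+\tau^4)}\,\dd\tau/\tau.$$
I split at $\tau = 1$: on $[\tau_1, 1]$ I use $\tau^4/(1+\tau^4) \geq \tau^4/2$ and substitute $v = (c_3u/2)^{1/4}\tau$ (so $\dd\tau/\tau = \dd v/v$), bounding the integral by $\int_{a}^{\infty} \ee^{-v^4}\,\dd v/v \ll \ee^{-a^4}$ with $a = (c_3 u/2)^{1/4}/\log y$; this yields decay $\ee^{-cu/(\log y)^4}$. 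On $[1, T]$, the integrand is $\leq \ee^{-c_3 u/2}$ and the integral is $\ll (\log u)^{4/3}\ee^{-c_3 u/2}$, trivially negligible. Under \eqref{e16}, $\log y \leq (\log x)^{1/5}/(\log_2 x)^{(1+\varepsilon)/5}$, whence $u/(\log y)^4 \geq c'(\log_2 x)^{1+\varepsilon} \geq c'(\log 2u)^{1+\varepsilon}$, producing the required exponential decay.

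For the inner range, I Taylor-expand $\Phi(s) := \log Z_q(s, y)$ about $s = \beta$. Since $\Phi^{(j)}(\beta) = (-1)^j\sigma_{j,q}$, taking the real part gives
$$\log\bigl|Z_q(\beta+i\tau, y)/Z_q(\beta, y)\bigr| = -\tfrac{1}{2}\tau^2\sigma_{2,q} + O\bigl(\tau^4\sigma_{4,q}\bigr).$$
By Lemma \ref{l33}, $\sigma_{2,q} \asymp u(\log y)^2$ and $\sigma_{4,q} \ll u(\log y)^4$; hence, for a sufficiently small fixed $\delta > 0$ and $|\tau| \leq \delta/\log y$, the quartic remainder absorbs at most half the main term, yielding $|Z_q(\beta+i\tau)/Z_q(\beta)| \leq \ee^{-c\tau^2\sigma_{2,q}}$. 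Substituting $v = \sqrt{c\sigma_{2,q}}\,\tau$, the relevant integral becomes $\int_{a_0}^{\infty}\ee^{-v^2}\,\dd v/v \ll \ee^{-a_0^2}/a_0^2$ with $a_0 = \sqrt{c\sigma_{2,q}}\,T_0 \asymp (\log u)^{(1+\varepsilon)/2}$, giving a contribution $\ll x^\beta Z_q(\beta, y)\,\ee^{-c(\log 2u)^{1+\varepsilon}}$. The leftover sub-range $[\delta/\log y, \tau_1]$ is covered by the fourth-power bound of Lemma \ref{l34}, producing $\ee^{-c\delta^4 u}$, more than sufficient.

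The main obstacle is the matching of regimes near $|\tau| \asymp 1/\log y$, where the quartic remainder $\tau^4\sigma_{4,q}$ in the Taylor expansion becomes comparable to the Gaussian main term $\tau^2\sigma_{2,q}$. This is exactly why the Taylor estimate must be confined to $|\tau| \leq \delta/\log y$ for a small fixed $\delta$ and then complemented by Lemma \ref{l34} on the intermediate sub-range. Collecting the three contributions and recalling that $\log 2u \asymp \log u$ in the range under consideration yields \eqref{e43} for a suitable $c_9 > 0$.
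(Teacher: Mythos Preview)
Your argument is correct and follows the same blueprint as the paper --- a Gaussian bound from the Taylor expansion of $\log Z_q$ near $s=\beta$ for small $|\tau|$, and Lemma~\ref{l34} for larger $|\tau|$ --- but with a slightly more economical decomposition. The paper splits $[T_0,T]$ at the $u$-dependent points $T_1:=u^{-1/3}/\log y$ and $T_2:=u^{-1/5}/\log y$, applying the Taylor argument in two stages on $[T_0,T_1]$ and $[T_1,T_2]$ (on the first sub-range the quartic contribution is $o(1)$; on the second it is only $\ll u^{1/5}$ but still beaten by the quadratic term $\gg u^{1/3}$), and invokes Lemma~\ref{l34} only from $T_2$ onward. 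You instead take a single \emph{fixed} cutoff $\delta/\log y$: the inequality $\tau^4\,u(\log y)^4\leqslant \delta^2\cdot\tau^2\sigma_{2,q}$, valid throughout $[T_0,\delta/\log y]$, delivers the Gaussian bound in one stroke, and the leftover sub-range $[\delta/\log y,1/\log y]$ is handled directly by the first case of Lemma~\ref{l34}, yielding $\ee^{-c_3\delta^4 u}$. This buys you one fewer cut and avoids the separate bookkeeping on $[T_1,T_2]$. One small imprecision worth flagging: the fourth-order Taylor remainder is controlled by $\sup_{0\leqslant t\leqslant 1}|\varphi_{4,q}(\beta+it\tau,y)|$ rather than by $\sigma_{4,q}=\varphi_{4,q}(\beta,y)$ itself; the paper records (just after \eqref{e47}) that this supremum is $\ll u(\log y)^4$ for $|\tau|\leqslant 1/\log y$, which is exactly the bound you then invoke, so the argument is unaffected.
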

\begin{proof}
Posons $T_1 := u^{-1/3}/\log y$, $T_2:=u^{-1/5}/\log y$. D'après le 
 Lemme \ref{l34}, nous pouvons écrire
$$   I_q(T_2,T)  \ll  x^{\beta}Z_q(\beta,y)\big \{I_{21}+I_{22}\big\},   $$
avec
$$I_{21}:=\int_{T_2}^{1/\log y}\ee^{-c_3u(\tau \log y)^4}\frac{\dd\tau}{\beta+\tau}  \ll   \ee^{-c_3u(T_2 \log y)^4} \log u \ll\ee^{-{\scriptscriptstyle\frac12} c_3u^{1/5}}         $$
et
$$ \begin{aligned}
 I_{22}&:=\int_{1/\log y}^T \ee^{-c_3u \tau^4/ (1+\tau^4) }\frac{\dd\tau}{\tau}  \ll    x^{-{\scriptscriptstyle\frac12}c_3u/(\log y)^4}\log_2y +  \ee^{-{\scriptscriptstyle\frac12}c_3u}(\log u)^{4/3} .  
 \end{aligned} $$
Il s'ensuit que
\begin{equation}\label{e44}
 I_q(T_2,T)  \ll  x^{\beta}Z_q(\beta,y) \ee^{-c_9(\log (2u))^{1+\varepsilon}} .
\end{equation}\par

 Lorsque $T_1< |\tau| \leqslant T_2 $, nous avons $\tau^2 \sigma_{2,q}\gg u^{1/3}$ et $\tau^4 \sigma_{4,q}\ll u^{1/5}$, en vertu  Lemme \ref{l33}. Il résulte alors d'un développement limité que
\begin{equation}\label{e45}
\begin{aligned}
Z_q(s,y)x^s &=Z_q(\beta,y)x^{\beta}\ee^{i\tau {\gamma'_q}(\beta)  -\tau^2\sigma_{2,q}/2+i \tau^3\sigma_{3,q}/6+ \sigma_{4,q}\tau^4/24+O(1)}  \\
&  \ll   Z_q(\beta,y)x^{\beta} \ee^{-c_{10}u^{1/3}},
\end{aligned}
\end{equation}
ce qui implique
\begin{equation}\label{e46}
I_q(T_1,T_2) \ll x^{\beta} Z_q(\beta,y)\ee^{-c_{11}u^{1/3}}.
\end{equation}\par 

Lorsque $T_0 <  |\tau | \leqslant T_1 $, nous avons $\tau^2 \sigma_{2,q}\gg (\log (2u))^{1+\varepsilon} $ et $\tau^4 \sigma_{4,q}\ll u^{-1/3}$ d'après le Lemme \ref{l33}. Un développement limité fournit donc
\begin{equation}\label{e461}
x^s Z_q(s,y) \ll x^{\beta} Z_q(\beta, y) \ee^{   -{\tau^2 \sigma_{2,q}}/{2}+ {\tau^4  \sigma_{4,q}}/{4}+O(1)   } 
 \ll x^{\beta}Z_q(\beta,y)  \ee^{-c_{12} (\log (2u))^{1+\varepsilon}}.
\end{equation}
L'estimation \eqref{e43} découle de \eqref{e44}, \eqref{e46} et \eqref{e461}.
\end{proof}

Nous sommes à présent en mesure de compléter la preuve du Théorème \ref{T1}.
D'après \eqref{e41} et \eqref{e43}, nous avons
\begin{equation}  \label{e47}
\Upsilon_q(x,y) = I_q(0,T_0) + O \Big( x^{\beta}Z_q(\beta,y)\ee^{-c_9(\log (2u))^{1+\varepsilon}} \Big). 
\end{equation}
\par 
Évaluons à présent le terme principal de \eqref{e47}.\par
Puisque $T_0^j\sigma_{j,q} \ll 1$ lorsque $j=3,4$ et $\varphi_{4,q}(\beta+i \tau,y) \ll \sigma_4^*:= u(\log y)^4$ pour $|\tau|\leqslant 1/\log y$ et $T_0 \gamma'_q(\beta) \ll 1$ grâce à \eqref{mEq} et \eqref{e32}, nous pouvons écrire
\begin{equation}
\label{I}
\begin{aligned}
I_q(0,T_0)& =\frac{x^{\beta}Z_q(\beta,y)}{2\pi}\int_{-T_0}^{T_0} \ee^{i\tau {\gamma_q}'(\beta)-\tau^2 \sigma_{2,q}/2 +i \tau^3 \sigma_{3,q}/6+O(\tau^4 \sigma_4^*)  }\frac{\mathrm{d}\tau}{\beta+i\tau}\\
 &= \frac{x^{\beta}Z_q(\beta,y)}{2\pi}\Big \{ J_0+iJ_1 +\tfrac{1}{6}iJ_2 +O(K)    \Big \}
\end{aligned}
\end{equation}
où
\begin{equation*}
\begin{aligned}
J_0 & := \int_{-T_0}^{T_0}\ee^{-\tau^2\sigma_{2,q} /2} \frac{\mathrm{d}\tau}{\beta + i \tau}, \quad  J_1  := \gamma'_q(\beta) \int_{-T_0}^{T_0} \tau \ee^{-\tau^2 \sigma_{2,q}/2 }\frac{\mathrm{d}\tau}{\beta+i \tau}, \\
 J_2 & :=\sigma_{3,q} \int_{-T_0}^{T_0} \tau^3 \ee^{-\tau^2 \sigma_{2,q}/2 }   \frac{\mathrm{d}\tau}{\beta+i \tau}, \\
 K     &:= \int_{-T_0}^{T_0} \ee^{-\tau^2 \sigma_{2,q}/2} \Big( \tau^6 \sigma_{3,q}^2 + \tau^4 \sigma_4^*  + \tau^2 \gamma'_q(\beta)^2   \Big)  \frac{\mathrm{d}\tau}{ \beta +|\tau|} \cdot 
\end{aligned}
\end{equation*}\par 

Des calculs semblables à ceux de la fin de la proposition 2.13 de \cite{RT02}, utilisant le Lemme~\ref{l33}, fournissent
\begin{equation}
\label{J0}
  J_0= 2\pi G \big(\beta \sqrt{\sigma_{2,q}}\big)+O(\ee^{-c_9u^{1/3}}), \qquad  
J_2 \ll \frac{1}{u(1+\beta \sqrt{\sigma_{2,q}})}. 
\end{equation}\par 

Par le changement de variables $v=\tau\sqrt{\sigma_{2,q}}$, nous pouvons écrire
\begin{equation}\label{K}
\begin{aligned}
K&= \int_{-T_0 \sqrt{\sigma_{2,q}}}^{T_0 \sqrt{\sigma_{2,q}}}\ee^{-v^2/2}\Big(  \frac{\sigma_{3,q}^2v^6}{\sigma_{2,q}^3}  + \frac{\sigma_4^*v^4}{\sigma_{2,q}^2} + \frac{\gamma_q'(\beta)^2v^2}{\sigma_{2,q}}   \Big)\frac{\dd v}{\beta \sqrt{\sigma_{2,q}}+|v|}\\
 & \ll \frac{1+ D_q^2}{u(1+\beta\sqrt{\sigma_{2,q}})},
 \end{aligned}
\end{equation}
la dernière majoration découlant des Lemmes \ref{l32} et \ref{l33}.
\par\goodbreak
En tenant à nouveau compte du fait que l'intégrale d'une fonction impaire sur un intervalle symétrique par rapport à l'origine  est nulle, le même changement de variables fournit
\begin{equation*}
\begin{aligned}
J_1 &=  \frac{-i\gamma_q'(\beta)}{\sqrt{\sigma_{2,q}}}\int_{-T_0 \sqrt{\sigma_{2,q}}}^{T_0\sqrt{\sigma_{2,q}}}\frac{v^2 \ee^{-v^2/2}\d v}{\beta^2\sigma_{2,q} +v^2},  \\
 &=  \frac{-i\gamma_q'(\beta)}{\sqrt{\sigma_{2,q}}} \Bigg \{  \int_{-T_0 \sqrt{\sigma_{2,q}}}^{T_0 \sqrt{\sigma_{2,q}}} \ee^{-v^2/2}\d v  -\beta^2 \sigma_{2,q}  \int_{-T_0\sqrt{\sigma_{2,q}}}^{T_0 \sqrt{\sigma_{2,q}}}\frac{\ee^{-v^2/2}\d v}{\beta^2\sigma_{2,q} +v^2}  \Bigg \}\cdot
\end{aligned}
\end{equation*}
 Nous avons 
\begin{equation*}
\begin{aligned}
 &\frac1{\sqrt{2\pi}}\int_{-T_0 \sqrt{\sigma_{2,q}}}^{T_0 \sqrt{\sigma_{2,q}}} \ee^{-v^2/2}\d v = 1+O\Big(\ee^{-(\log u)^{1+\varepsilon/2}}\Big) ,\\
&\beta\sqrt{\sigma_{2,q}}\int_{-T_0\sqrt{\sigma_{2,q}}}^{T_0 \sqrt{\sigma_{2,q}}}\frac{\ee^{-v^2/2}\d v}{\beta^2\sigma_{2,q} +v^2}=J_0,
\end{aligned}
\end{equation*}
et donc 
\begin{equation}
\label{iJ1}
\frac{iJ_1}{2\pi}=\frac{\gamma_q'(\beta)}{\sqrt{2\pi\sigma_{2,q}}}\Big\{1-\beta\sqrt{2\pi\sigma_{2,q}}G \big(\beta \sqrt{\sigma_{2,q}}\big)\Big\}+O\Big(\ee^{-(\log u)^{1+\varepsilon/2}}\Big).
\end{equation}\par 
En reportant \eqref{J0}, \eqref{K} et \eqref{iJ1} dans \eqref{I} puis \eqref{e47}, nous obtenons
\begin{equation}
\label{e411}
\Upsilon_q(x,y)
= x^{\beta}Z_q(\beta,y)  G(\beta  \sqrt{\sigma_{2,q}}) \Big \{     1+ R_q(\beta) +O  \Big(   \frac{1+D_q^2}{u}   \Big)   \Big \}
\end{equation}
avec 
$$   R_q(\beta):=\beta  \gamma_q'(\beta) \Big(   \frac{1}{\sqrt{2 \pi \sigma_{2,q}}\beta G(\beta \sqrt{\sigma_{2,q}}) }-1 \Big)     .  $$ \par 

Comme \eqref{e14} implique
\begin{equation}\label{e412}
    \frac{1}{\sqrt{2\pi} zG(z) }-1 \asymp \frac{1}{z(z+1)}  \qquad (z>0)   , 
    \end{equation}
il vient, par \eqref{e32} et \eqref{beta},
\begin{equation}\label{e413}
 R_q(\beta) \ll    \frac{D_q}{\sqrt{u}(1+\beta\sqrt{\sigma_{2,q}} )}\ll\frac{D_q(1+\eta)}{\sqrt{u}+\eta u} \cdot 
\end{equation}\par 

Pour obtenir \eqref{e17}, il suffit donc de montrer que l'on peut remplacer $\sigma_{2,q}$ par $\sigma_2$ dans \eqref{e411} et \eqref{e413} sans altérer le terme d'erreur. \par 

 Au vu de \eqref{sig2q}, c'est clair dans le cas de \eqref{e413}. Pour traiter le cas de \eqref{e411}, observons que l'on a $\sigma_{2,q}-\sigma_2\ll C_q(\log y)^2$ en vertu de la seconde majoration \eqref{e32}, et donc $$\sqrt{\sigma_{2,q}}-\sqrt{\sigma_2}\ll  C_q(\log y)^2/\sqrt{\sigma_2}\ll C_q(\log y)/\sqrt{u}.$$
Comme il résulte de \eqref{e14} que  
\begin{equation} \label{e414}
\frac{ G'(z)}{G(z)}=z-\frac{1}{\sqrt{2\pi}G(z)} \asymp \frac{-1}{1+z}  \qquad (z>0),
\end{equation} 
nous obtenons, par la formule des accroissements finis,
 \begin{equation}\label{e415}
G(\beta \sqrt{\sigma_{2,q}})=\Big\{1+O\Big(\frac{ C_q \beta \log y}{\sqrt{u}(1+\beta \sqrt{\sigma_2} )}\Big)\Big\}G\big(\beta\sqrt{\sigma_2}\big).
\end{equation}\par 
Par \eqref{sig2q} et l'évaluation de $C_q=\min(\omega(q),\Delta_q^2)$ résultant de la Remarque \ref{remsDqEq}(a), le terme d'erreur de \eqref{e415} est en toute circonstance 
$$\ll \frac{C_q\beta\log y}{\sqrt{u}+u\beta\log y}\ll \frac{C_q\eta}{\sqrt{u}+\eta u}\ll\begin{cases}\displaystyle\frac{D_q\eta}{\sqrt{u}+\eta u}& \mbox{ si } \eta\leqslant 1\\
\noalign{\vskip-3mm}\\
\displaystyle\frac{D_q^2}u& \mbox{ si } \eta>1.
\end{cases}$$ 
Cela établit bien  \eqref{e17}.
\par 
Pour prouver \eqref{e18},
précisons le terme d'erreur de \eqref{e17} lorsque $\eta \leqslant \tfrac12$.  Grâce à \eqref{thm1}, nous pouvons  supposer $q \geqslant 2$.  Nous pouvons également supposer $\eta$ arbitrairement petit. D'après \eqref{beta}, nous avons  $\beta \log y \leqslant 1$ dès que $y$ est assez grand. \par  
En vertu de \eqref{e33}, nous pouvons écrire
\begin{equation}\label{e416}
    \gamma_q'(\beta) =  \tfrac12 \omega (q) \log y \big \{   1 + O \big(\mathfrak{R} \big)   \big \}    
\end{equation}    
avec $\mathfrak{R}:= \eta + (\log q)/\{\omega(q) \log y \} $. Nous pouvons donc remplacer le signe $\ll$ par $\asymp$ dans l'estimation \eqref{e413}.\par 

De plus, la seconde estimation \eqref{e33} permet  d'écrire 
\begin{equation}\label{e417}
\begin{aligned}
\sigma_2- \sigma_{2,q} 
&= \tfrac{1}{12}\omega(q)(\log y)^2 \big  \{1 +O (\mathfrak{R} ) \big \}.
\end{aligned}
\end{equation}\par 

Il suit
\begin{equation}\label{e418}
\sqrt{\sigma_{2}}- \sqrt{\sigma_{2,q}} \asymp \frac{\omega(q) \log y}{\sqrt{u}},
\end{equation}
d'où, par \eqref{e414}, 
\begin{equation}\label{e419}
   \frac{G(\beta \sqrt{\sigma_{2,q}})}{G(\beta \sqrt{\sigma_2} )}-1 \asymp \frac{\eta \omega(q)}{\sqrt{u}(1+\eta\sqrt{u} )} \cdot
\end{equation}   
La formule \eqref{e18} découle de ces observations, en reportant dans \eqref{e411}.\par 

Prouvons à présent  l'estimation \eqref{e19}, relative au cas $\eta=o(1/\sqrt{u})$. Il résulte de \eqref{e418} que 
\begin{equation*}
\sqrt{\sigma_{2,q}}=\sqrt{\sigma_2}  \Big\{ 1+O \Big( \frac{\omega(q)}{u} \Big)   \Big \}
\end{equation*}
et de \eqref{e38} que 
\begin{equation*}
\sqrt{\sigma_2} =\frac{\sqrt{u}\log y}{\sqrt{2}} \Big \{  1+O \Big(  \eta +\frac{1}{\log y}  \Big) \Big \}.
\end{equation*}
Grâce à \eqref{e416}, il suit
\begin{equation*}
\frac{\gamma_q'(\beta)  }{\sqrt{2\pi \sigma_{2,q}} }= \frac{\omega(q)}{2\sqrt{\pi u}} \big \{  1+ O \big(  \mathfrak{R}  \big)   \big \}.
\end{equation*}\par 

Compte tenu de \eqref{e14}, \eqref{e411} et \eqref{e419}, nous obtenons bien \eqref{e19} lorsque $$\beta\sqrt{\sigma_2}\asymp \eta\sqrt{u}=o(1).$$ 
\par

\section{Preuve du Théorème \ref{T3}}
Plaçons-nous dans les hypothèses \eqref{ypetit} et \eqref{e113}.\par 
Parallèlement à \eqref{e41}, nous pouvons écrire, pour $\chi \neq \chi_0 $ et $T := Y_{2 \varepsilon}  $,
\begin{equation}\label{e70}
\Upsilon(x,y;\chi) = \frac{1}{2i\pi}  \int_{\beta -iT}^{\beta+i T} {Z}(s,\chi;y) x^s \frac{\mathrm{d}s}{s} + O \Big( \frac{x^{\beta}Z_q(\beta,y) \log T}{{T}}  \Big)  ,\end{equation}
et, d'après \eqref{majZs}, pour une constante convenable $\kappa >0$, 
\begin{equation}\label{e71}
|Z(s,\chi;y)|  \leqslant Z_q(\beta,y) \ee^{-\kappa \rW_q(y,\tau;\chi)}\qquad (s=\beta+i\tau,\, |\tau | \leqslant T).
\end{equation}
où $\rW_q(y,\tau;\chi)$ a été défini en \eqref{e313}. \par 
Grâce à \eqref{y(1-beta)}, une sommation d'Abel --- \cf\ \cite[lemme 3.6]{RT} --- fournit également, sous la seule condition \eqref{e11}, 
\begin{equation}
\label{sompb}
\sum_{p\leqslant y}\frac1{p^\beta}\asymp u.
\end{equation}
\par 
La quantité $\rD(y,\tau;\chi)$ étant définie en \eqref{e317}, nous déduisons donc de \eqref{1-beta}, \eqref{y}, et \eqref{e326}, via l'inégalité de Cauchy-Schwarz, que 
\begin{equation}
\label{W}
  \rW(y,\tau; \chi)   \geqslant  \dfrac{    \rD(y,\tau;\chi)^2 }{   (\log y)^2  \sum_{p\leqslant y}{p^{-\beta}}}\gg    \frac{u}{  1 +\vartheta(\chi)(\log u)^4  }, 
   \end{equation}
où $\vartheta(\chi)$ a été défini par \eqref{e319}.
\par 

Rappelons la notation \eqref{zq} et observons  que l'hypothèse \eqref{e113} implique \mbox{$z_q\ll \log y$}, et donc, compte tenu de \eqref{1-beta}, pour tout $\varepsilon>0$,
\begin{equation}\label{pmidq}
\sum_{p \, \mid \, q}\frac{1}{p^{\beta}}\ll  \frac{z_q^{1-\beta}-1}{(1 - \beta)\log z_q}\ll_\varepsilon {u^{\varepsilon}}.     
\end{equation}
\par 

Compte tenu de \eqref{W}, il vient, pour un choix convenable de $c_2>0$,
\begin{equation}\label{e72} 
|  Z(s,\chi;y )  | \leqslant Z_q(\beta,y) \ee^{ -c_2u(\chi)}     \qquad  (|\tau| \leqslant Y_{  2 \varepsilon}),
\end{equation}
où l'on a posé $u(\chi):=u/\{ 1+\vartheta(\chi)(\log u)^4\}$.
\par 
En reportant  \eqref{e72} dans \eqref{e70}, nous obtenons
\begin{eqnarray*}
\Upsilon(x,y;\chi) & \ll & x^{\beta} Z_q(\beta,y) \Big \{  (\log y)^{3/2} \ee^{ -c_2u(\chi)}    + Y_{\varepsilon}^{-1}   \Big \} ,
\end{eqnarray*}
et donc, compte tenu de \eqref{UP-ygrand},
\begin{eqnarray*}
   \Upsilon(x,y;\chi) &\ll & \Upsilon_q(x,y) \sqrt{u}\log y  \Big \{  (\log y)^{3/2} \ee^{ -c_2u(\chi)}  + Y_{\varepsilon}^{-1}   \Big \}.
 \end{eqnarray*}
La condition supplémentaire \eqref{ypetit} permet  de conclure. 

\section{Preuve du Théorème \ref{T4}}

La relation d'orthogonalité des caractères permet d'écrire
$$    \Upsilon(x,y;a,q)=\frac{1}{\varphi(q)}  \Big \{  \Upsilon_q(x,y) + \sum_{\chi\neq \chi_0} \overline{\chi(a)} \Upsilon(x,y;\chi)   \Big \}   .   $$
L'estimation annoncée découle donc immédiatement du Théorème \ref{T3}.

\section{Preuve du Théorème \ref{T5} }

Plaçons-nous dans les hypothèses de l'énoncé.
Il résulte de \eqref{e111} et de l'estimation $q/\varphi(q)\ll \log _2 q$ que
\begin{equation}\label{e62}
\Psi_q(x,y)-\Upsilon_q(x,y) \ll \frac{ q u \log (2u) \Psi_q(x,y)}{\varphi(q)\sqrt{y}\log y }\ll \frac{\Psi_q(x,y)}{(\log y)^2}  \cdot
\end{equation}
De plus
\begin{equation}\label{e63}
\begin{aligned}
\Psi(x,y;a,q) -\Upsilon(x,y;a,q) &  \leqslant \  \sum_{\substack{p \leqslant y \\  p \, \nmid \, q }}  \Psi \Big( \frac{x}{p^{\nu_p+1}},y;a/p^{\nu_p+1},q  \Big)  \\
         & \leqslant \sum_{p \leqslant y} \sup_{(b,q)=1} \Psi \Big( \frac{x}{p^{\nu_p+1}},y;b,q  \Big)
\end{aligned}
\end{equation}
où, dans la première inégalité, $a/p^{\nu_p+1}$ désigne la classe des entiers $b$ tels que $$p^{\nu_p+1}b\mmd aq.$$ \par
D'après \eqref{Gra}, \eqref{e53}, sous la condition supplémentaire $y\leqslant x^{1/3}$, chaque terme de la dernière somme est 
$$\ll\Psi_q\Big( \frac{x}{p^{\nu_p+1}},y  \Big)\ll\frac{f_q(\alpha)\Psi(x,y)}{p^{(\nu_p+1)\alpha}}\ll\frac{\Psi_q(x,y)}{p^{(\nu_p+1)\alpha}}\ll\frac{\Upsilon_q(x,y)}{p^{(\nu_p+1)\alpha}}\cdot$$
Les estimations \eqref{e55} et  \eqref{Gra} impliquent alors  \eqref{e116}.\footnote{Il est à noter que dans ce cas, la condition sur $q$ peut être relâchée en $q\leqslant y^{1-\varepsilon}$.} 
\par  

Lorsque $x^{1/3}< y \leqslant x$, nous déduisons simplement de \eqref{e63} que
$$    \Psi(x,y;a,q) - \Upsilon(x,y;a,q) \ll \sum_{ p\leqslant y}\frac{x}{p^{\nu_p+1}}\ll \frac{\Psi(x,y)}{ \sqrt{y}\log y}\ll\frac{\Psi_q(x,y)}{\varphi(q)\log y}  \cdot  $$
Nous obtenons encore \eqref{e116}.\par 

\bibliographystyle{abbrvfr}
{\footnotesize
\bibliography{mabiblio.bib}}
\bigskip
\leftskip10mm{\parindent0mm \footnotesize Institut Élie Cartan,\par 
 Université de Lorraine \\ BP 70239 \\  54506 Vand\oe{}uvre-lès-Nancy Cedex, \par 
France\par \smallskip
\noindent \tt cecile.dartyge\at univ-lorraine.fr, \par 
david.feutrie\at univ-lorraine.fr, \par 
gerald.tenenbaum\at univ-lorraine.fr\par }
\end{document}